\numberwithin{equation}{section}
\theoremstyle{plain}
\newtheorem{theorem}{Theorem}
\newtheorem{lemma}[theorem]{Lemma}
\newtheorem{corollary}[theorem]{Corollary}
\theoremstyle{definition}
\theoremstyle{remark}
\newtheorem{remark}[theorem]{Remark}
\tikzstyle{vertex}=[circle, draw, fill=black, inner sep=0pt, minimum width=6pt]
\tikzstyle{fat}=[circle, draw=black, fill=black, inner sep=0pt, minimum width=10pt]
\tikzstyle{pedge}=[draw,-]
\tikzstyle{nedge}=[draw,densely dashed]
\tikzstyle{empty}=[circle, draw=white, inner sep=2pt, fill=white, minimum width=4pt]
\DeclareMathOperator{\supp}{supp}
\newcommand{\norm}[1]{\lVert#1\rVert}
\newcommand{\kernel}{\operatorname{ker}}
\newcommand{\R}{\mathbb{R}}
\newcommand{\Z}{\mathbb{Z}}
\newcommand{\LG}{\operatorname{\mathfrak{L}}}
\newcommand{\cS}{\mathcal{S}}
\title[graphs with smallest eigenvalue greater than $-2$]
{Edge-signed graphs with smallest eigenvalue greater than $-2$}
\author[G. Greaves]{ Gary Greaves$^1$ }
\thanks{$^1$GG was supported by JSPS KAKENHI; 
grant number: 24$\cdot$02789}
\author[J. Koolen]{ Jack Koolen$^2$ }
\thanks{$^2$Part of this work was done while JHK was 
visiting Tohoku University with a JSPS visitors grant. 
He also greatly appreciates the support of the `100 talents' program 
of the Chinese Academy of Sciences. }
\author[A. Munemasa]{ Akihiro Munemasa }
\author[Y. Sano]{ Yoshio Sano$^3$ }
\thanks{$^3$YS was supported by JSPS KAKENHI; grant number: 25887007.}
\author[T. Taniguchi]{ Tetsuji Taniguchi$^{4}$ }
\thanks{$^{4}$TT was supported by JSPS KAKENHI; grant number: 25400217}
\address{Research Center for Pure and Applied Mathematics,
Graduate School of Information Sciences, 
Tohoku University, Sendai 980-8579, Japan}
\email{grwgrvs@ims.is.tohoku.ac.jp}
\address{School of Mathematical Sciences, 
USTC, and Wen-Tsun Wu Key Laboratory, CAS,
Hefei, Anhui, 230026, P.R. China}
\email{koolen@ustc.edu.cn}
\address{Research Center for Pure and Applied Mathematics, 
Graduate School of Information Sciences, 
Tohoku University, Sendai 980-8579, Japan}
\email{munemasa@math.is.tohoku.ac.jp}
\address{Division of Information Engineering, 
Faculty of Engineering, Information and Systems,  
University of Tsukuba, Ibaraki 305-8573, Japan} 
\email{sano@cs.tsukuba.ac.jp}
\address{Matsue College of Technology, Matsue 690-8518, Japan} 
\email{tetsuzit@matsue-ct.ac.jp}
\dedicatory{Dedicated to Alan J. Hoffman on the occasion of his ninetieth birthday.}
\begin{document}
\begin{abstract}
We give 
a structural classification of edge-signed graphs 
with smallest eigenvalue greater than $-2$.
We prove a conjecture of Hoffman about the smallest eigenvalue of 
the line graph of a tree 
that was stated in the 1970s.
Furthermore, we prove a more general result 
extending Hoffman's original statement 
to all edge-signed graphs with smallest eigenvalue greater than $-2$. 
Our results give a classification of the special graphs of fat Hoffman graphs
with smallest eigenvalue greater than $-3$.
\end{abstract}
\maketitle

\section{Introduction}

The (adjacency) eigenvalues of a graph $G$ on $m$ vertices are defined as the eigenvalues of its adjacency matrix $A$.
Since $A$ is a real symmetric matrix, its eigenvalues $\lambda_i(A)$ are real; we arrange them as follows
\[
	\lambda_1(A) \leqslant \lambda_2(A) \leqslant \cdots \leqslant \lambda_m(A).
\]
For convenience we will sometimes also refer to each $\lambda_i(A)$ as $\lambda_i(G)$.
Much attention has been directed towards the study of graphs with smallest eigenvalue at least $-2$ \cite{BusseNeum,Doob:LineGraphsEigen73,DoobCvetkovic:LAA79,McKee:IntSymCyc07,ArsComb1993}.
Most of this attention has centred around the beautiful theorem of Cameron, Goethals, Shult, and Seidel \cite{CGSS}, which classifies 
graphs having smallest eigenvalue at least $-2$.
In the late 1970s Hoffman \cite{Hoffman:lt-2} studied graphs $G$ with $\lambda_1(G) \geqslant -1 - \sqrt{2}$ and later Woo and Neumaier \cite{Woo:ltHoff} furthered Hoffman's work, introducing the so-called Hoffman graph.
Recently, Jang, Koolen, Munemasa, and Taniguchi \cite{-3} proposed a programme to classify fat Hoffman graphs with smallest eigenvalue at least $-3$.
The present work fills a part of this programme, and 
includes the results of \cite{gt-3}.

In this article we classify, up to switching equivalence, edge-signed graphs with smallest eigenvalue greater than $-2$. 
(See Section~\ref{sec:pre} for the definition of the eigenvalues of an edge-signed graph.)
In particular, we recover as a special case the classification of graphs with smallest
eigenvalue greater than $-2$ given earlier by Doob and Cvetkovi\'c 
\cite{DoobCvetkovic:LAA79}.
As an application, we classify the special graphs of fat Hoffman
graphs with smallest eigenvalue greater than $-3$.
Some of such graphs are related to the modified adjacency
matrix that appeared in a paper of Hoffman \cite{Hoffman:LimitLeastEig1977}.
Below we describe the conjecture Hoffman made about these modified adjacency matrices.

Let $T$ be a tree on $m \geqslant 2$ vertices with line graph $\LG(T)$ 
and let $e$ be an end-edge of $T$ (one of whose vertices has valency $1$).
Then $e$ is a vertex of $\LG(T)$.
For a graph $G$ and a vertex $v \in V(G)$, define $\hat{A}(G,v)$ to be the adjacency matrix of $G$, modified 
by putting a $-1$ in the diagonal position corresponding to $v$.
In one of his papers \cite{Hoffman:LimitLeastEig1977}, 
Hoffman conjectured that
$\lambda_{1}(\hat{A}(\LG(T),e)) < \lambda_1(\LG(T))$
for all trees $T$ and end-edges $e$.
In Section~\ref{sec:Hoffcon} we prove this conjecture, which we record as the following theorem.

\begin{theorem}\label{thm:Hoffman}
	Let $T$ be a tree and let $e$ be an end-edge of $T$.
	Then $\lambda_1(\hat{A}(\LG(T),e)) < \lambda_1(\LG(T))$.
\end{theorem}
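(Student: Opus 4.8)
The plan is to convert the statement into a comparison of two Laplacian eigenvalues of $T$ via the incidence matrix, and then to locate the strictness in the special behaviour of a pendant vertex.

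First I would pass to the incidence matrix. Let $V = V(T)$, let $B$ be the $m\times(m-1)$ (unsigned) vertex--edge incidence matrix of $T$, whose column at an edge $\{a,b\}$ is $\chi_a+\chi_b$ (here $\chi_x$ is the standard basis vector of the vertex $x$), and recall $A(\LG(T)) = B^\top B - 2I$. Since $T$ is a connected bipartite graph on $m$ vertices, $B$ has full column rank $m-1$, so $B^\top B$ is positive definite and shares its spectrum with the nonzero spectrum of the signless Laplacian $Q(T)=BB^\top$; as $Q(T)$ has a simple eigenvalue $0$ and is similar (via the bipartite signature) to the Laplacian $L(T)$, we get $\lambda_{\min}(B^\top B)=\lambda_2(Q(T))=\lambda_2(L(T))$, the algebraic connectivity $a(T)$. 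Hence $\lambda_1(\LG(T))=a(T)-2$. Writing $e=\{v,w\}$ with $v$ the vertex of valency $1$, I would then replace the column $\chi_v+\chi_w$ of $B$ at $e$ by the single indicator $\chi_w$, obtaining $\tilde B$ with $\tilde B^\top\tilde B=B^\top B-u_eu_e^\top=\hat A(\LG(T),e)+2I$ (where $u_e$ is the edge-coordinate basis vector at $e$), while a direct computation shows that $\tilde B\tilde B^\top$ agrees with $Q(T)$ off the $v$-row and $v$-column and is zero there; since $\tilde B$ still has full column rank this yields $\lambda_1(\hat A(\LG(T),e))=\lambda_{\min}(Q(T)[V\setminus v])-2$. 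The theorem thus becomes the purely spectral assertion
\[
\lambda_{\min}\bigl(Q(T)[V\setminus v]\bigr)<\lambda_2(Q(T)),\qquad\text{equivalently}\qquad \lambda_{\min}\bigl(L(T)[V\setminus v]\bigr)<\lambda_2(L(T)),
\]
for a pendant vertex $v$, where $L(T)[V\setminus v]=L(T-v)+u_wu_w^\top$.

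Cauchy interlacing gives the non-strict inequality $\lambda_{\min}(L(T)[V\setminus v])\le\lambda_2(L(T))$ at once, so the entire content is to exclude equality. Note that $\lambda_{\min}(L(T)[V\setminus v])=\min\{R_{L(T)}(x):x_v=0\}$, where $R_{L(T)}$ is the Rayleigh quotient, and $\lambda_2(L(T))=\min\{R_{L(T)}(x):x\perp\mathbf 1\}$, with $\mathbf 1$ the all-ones vector spanning $\ker L(T)$. If some Fiedler vector $\phi$ of $T$ (an eigenvector for $a(T)$) has $\phi_v\neq0$, then $\psi=\phi-\phi_v\mathbf 1$ satisfies $\psi_v=0$, $\psi^\top L(T)\psi=\lambda_2\norm{\phi}^2$ and $\norm{\psi}^2=\norm{\phi}^2+m\phi_v^2$, so $R_{L(T)}(\psi)<\lambda_2$ and we are done. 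Inspecting the Lagrange condition $(L(T)-\lambda_2 I)x=\gamma u_v$ at a minimiser over $\{x_v=0\}$, equality can therefore hold only if \emph{every} Fiedler vector of $T$ vanishes at $v$.

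The main obstacle is exactly this degenerate regime, which genuinely occurs --- for the spider obtained by attaching two paths of length two and one pendant edge at a common centre, every Fiedler vector vanishes at the pendant vertex, yet the strict inequality still holds. Here the translation trick is useless and one must produce an eigenvector of $Q(T)[V\setminus v]$ lying strictly below $a(T)$ by other means. I would attack this through the pendant-deletion recursion
\[
\det\bigl(tI-L(T)\bigr)=(t-1)\det\bigl(tI-L(T)[V\setminus v]\bigr)-\det\bigl(tI-L(T)[V\setminus\{v,w\}]\bigr),
\]
combined with the fact that $L(T)[V\setminus v]$ is an irreducible $M$-matrix, so its smallest eigenvalue is simple with a strictly positive eigenvector; tracking the signs of these characteristic polynomials at $t=a(T)$ should force a root of $\det(tI-L(T)[V\setminus v])$ strictly below $a(T)$. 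Alternatively, and more in keeping with this paper, the identity $\tilde B^\top\tilde B=\hat A(\LG(T),e)+2I$ presents $\hat A(\LG(T),e)$ as the reduced Gram matrix of the representation of $\LG(T)$ in which the norm-two vector of the end-edge $e$ is replaced by a norm-one vector, so the strict inequality should fall out of the general extension of Hoffman's statement to edge-signed graphs with smallest eigenvalue greater than $-2$ proved earlier in the paper. I expect essentially all of the difficulty to be concentrated in this degenerate case.
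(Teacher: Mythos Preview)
Your reduction is exactly the paper's: pass via the incidence matrix to $\lambda_{\min}(L(T)[V\setminus v])<\lambda_2(L(T))$, dispose of the case where some Fiedler vector is nonzero at $v$ by a Rayleigh-quotient argument (the paper does this at the line-graph level as Lemma~\ref{lem:v1zero}), and invoke the irreducible $M$-matrix property of $L(T)[V\setminus v]$ for the remaining case. So the architecture is right.

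The gap is in your degenerate case. Your first suggestion (the pendant-deletion recursion for characteristic polynomials) is only a sketch, and your second suggestion is circular: the ``general extension'' is Theorem~\ref{thm:11}, which is proved \emph{after} Theorem~\ref{thm:Hoffman} and explicitly invokes it for the tree case. You cannot appeal to it here.

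What you are missing is a one-line observation that finishes the argument immediately with the $M$-matrix property you already stated. Suppose every Fiedler vector $\phi$ has $\phi_v=0$. Read the Laplacian eigenvalue equation at the pendant vertex $v$: since $\deg(v)=1$ and $v\sim w$, we get $(L\phi)_v=\phi_v-\phi_w=\lambda_2\phi_v=0$, hence $\phi_w=0$ as well. Because $\phi_v=0$, the restriction $\phi|_{V\setminus v}$ is a genuine eigenvector of $L(T)[V\setminus v]$ for the eigenvalue $\lambda_2(L(T))$, and it has a zero entry (at $w$). Since $L(T)[V\setminus v]$ is an irreducible $M$-matrix, its smallest eigenvalue has a strictly positive eigenvector, so $\lambda_2(L(T))$ cannot be that smallest eigenvalue; thus $\lambda_{\min}(L(T)[V\setminus v])<\lambda_2(L(T))$. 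This is precisely the paper's endgame (the paper derives $w_1=w_2=0$ via the bijection of Lemma~\ref{lem:LT} rather than the eigenvalue equation at $v$, but the mechanism is the same). No polynomial recursion is needed.
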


Furthermore, using the classification of edge-signed graphs (see Theorem~\ref{thm:3}) with smallest eigenvalue greater than $-2$, we prove a generalised version of Hoffman's conjecture (see Theorem~\ref{thm:11}).

In Section~\ref{sec:pre} we give our preliminaries.
In Section~\ref{sec:class} we prove the main part of the classification theorem of edge-signed graphs with smallest eigenvalue greater than $-2$ leaving the exceptional case to Section~\ref{sec:ex.grph}.
In Section~\ref{sec:Hoffcon} we prove Theorem~\ref{thm:Hoffman} and 
its generalised version, and in Section~\ref{sec:Hoffgraph} 
we comment on the application to Hoffman graphs with smallest eigenvalue
greater than $-3$.

\section{Edge-signed graphs and representations}
\label{sec:pre}

In this section we introduce some terminology that we use in our results.
An \textbf{edge-signed graph} $G$ 
is a triple 
$(V, E^{+}, E^{-})$
of a set $V$ of vertices, 
a set $E^{+}$ of $2$-subsets  
of $V$ (called $(+)$-\textbf{edges}), and 
a set $E^{-}$ of $2$-subsets  
of $V$ (called $(-)$-\textbf{edges}) 
such that  
$E^{+} \cap E^{-} = \emptyset$. 

Let $G$ be an edge-signed graph. 
We denote 
the set of vertices of $G$ 
by $V(G)$, 
the set of $(+)$-edges of $G$ 
by $E^{+}(G)$, 
and 
the set of $(-)$-edges of $G$ 
by $E^{-}(G)$. 
By a \textbf{subgraph}
$G' = (V(G'), E^{+}(G'), E^{-}(G'))$ of $G$ we mean a vertex
induced edge-signed subgraph, i.e., 
$V(G') \subseteq V(G)$ and
$E^{\pm}(G')=\{\{x,y\} \in E^{\pm}(G) \mid x,y \in V(G')\}$.
The \textbf{underlying graph} $U(G)$ of $G$ 
is the unsigned graph 
$(V(G),E^+(G) \cup E^-(G))$.

Two edge-signed graphs $G$ and $G'$ 
are said to be \textbf{isomorphic} 
if there exists a bijection $\phi: V(G) \to V(G')$ 
such that $\{u,v\} \in E^+(G)$ if and only if 
$\{\phi(u), \phi(v) \} \in E^+(G')$ and 
that $\{u,v\} \in E^-(G)$ if and only if 
$\{\phi(u), \phi(v) \} \in E^-(G')$. 

For an edge-signed graph $G$, 
we define its \textbf{signed adjacency matrix} $A(G)$ 
by 
\[
(A(G))_{uv}=
\begin{cases}
1  & \text{if } \{u,v\} \in E^+(G), \\
-1 & \text{if } \{u,v\} \in E^-(G), \\
0  & \text{otherwise.} 
\end{cases} 
\]
To ease language, we will refer to the signed adjacency matrix simply as the adjacency matrix.
The eigenvalues of $G$ are defined to be those of $A(G)$.

A \textbf{switching} at vertex $v$ is the process of swapping the signs of each edge incident to $v$.
Two edge-signed graphs $G$ and $G'$ 
are said to be \textbf{switching equivalent} if there exists a subset $W \subset V(G)$ such that $G^\prime$ is isomorphic to the graph obtained by switching at each vertex in $W$.
Note that switching equivalence is an equivalence relation that preserves eigenvalues.

Let $G$ be an edge-signed graph with smallest eigenvalue at least $-2$.
A \textbf{representation} of $G$ is a mapping $\phi$ from $V(G)$ to $\R^n$ 
for some positive integer $n$, 
such that $(\phi(u),\phi(v)) = \pm 1$ if
$\{u,v\} \in E^\pm (G)$ respectively, and $(\phi(u),\phi(v))=2\delta_{u,v}$
otherwise, 
where 
$\delta_{u,v}$ is Kronecker's delta, i.e., 
$\delta_{u,v}=1$ if $u=v$ and $\delta_{u,v}=0$ if $u \neq v$.
Since, 
for $A$ the adjacency matrix of $G$, the matrix $A + 2I$ is positive semidefinite,
$A + 2I$ is the Gram matrix of a set $S$ of vectors $\mathbf{x}_1, \dots, \mathbf{x}_m$.
These vectors satisfy $(\mathbf{x}_i, \mathbf{x}_i) = 2$ and 
$(\mathbf{x}_i, \mathbf{x}_j) = 0,\pm 1$ for $i \ne j$.
Sets of vectors satisfying these conditions 
determine 
line systems.
We denote by $[\mathbf{x}]$ the line determined by a nonzero vector
$\mathbf{x}$, in other words, $[\mathbf{x}]$ is the one-dimensional
subspace spanned by $\mathbf{x}$.
We say that $G$ is \textbf{represented by} the line system $S$ if $G$ has a representation $\phi$ such that $S = \{ [\phi(v)] : v \in V(G) \}$.

Below we give descriptions of three line systems, $A_n$, $D_n$ and $E_8$.
Let $\mathbf{e}_1, \dots, \mathbf{e}_n$ be an orthonormal basis for $\R^n$. 
\begin{align*}
	A_n &=
	\left \{ [ \mathbf{e}_i - \mathbf{e}_j ] 
	: 1 \leqslant i < j \leqslant n+1 \right \}\quad(n\geqslant1), \\
	D_n &= A_{n-1} \cup \left \{ [ \mathbf{e}_i + \mathbf{e}_j ] 
	: 1 \leqslant i < j \leqslant n \right \}\quad(n\geqslant4),\\
	E_8 &= D_8 \cup 
	\left \{ [ \frac{1}{2} \sum_{i=1}^8 \epsilon_i\mathbf{e}_i ] 
	: \epsilon_i = \pm 1, \prod_{i=1}^8 \epsilon_i = 1 \right \}.
\end{align*}
These line systems are used in the following classical result of Cameron, Goethals, Shult, and Seidel.

\begin{theorem}[\cite{CGSS}]
	\label{thm:CGSS}
	Let $G$ be a connected edge-signed graph with $\lambda_1(G) \geqslant -2$.
	Then $G$ is represented by a subset of either $D_n$ or $E_8$.
\end{theorem}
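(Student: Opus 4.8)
The plan is to translate the spectral hypothesis into the geometry of Gram vectors and then reduce everything to the classification of simply-laced root systems. As already observed in the excerpt, $\lambda_1(G)\geqslant-2$ forces $A(G)+2I$ to be positive semidefinite, so it is the Gram matrix of vectors $\mathbf{x}_1,\dots,\mathbf{x}_m$ with $(\mathbf{x}_i,\mathbf{x}_i)=2$ and $(\mathbf{x}_i,\mathbf{x}_j)\in\{0,\pm1\}$ for $i\neq j$; the associated lines $[\mathbf{x}_i]$ form a line system $S$ in which any two distinct lines meet at $90^\circ$ or $60^\circ$. Since $G$ is connected, $S$ is indecomposable: were the lines to split into two mutually orthogonal families, the corresponding vertex sets would span no edge between them (edges correspond to nonzero, hence $\pm1$, inner products), contradicting connectedness.

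First I would replace $S$ by its star-closure $\bar S$, the smallest line system containing $S$ that is closed under the operation whereby, whenever two lines $[\mathbf{a}],[\mathbf{b}]$ meet at $60^\circ$ (so that, after choosing signs, $(\mathbf{a},\mathbf{b})=1$), the third line $[\mathbf{a}-\mathbf{b}]$ of the resulting equilateral star is also adjoined. One checks that $\bar S$ remains indecomposable and that it suffices to prove the theorem for $\bar S$, since $S\subseteq\bar S$ and a subset of $D_n$ or $E_8$ has all its sublines in $D_n$ or $E_8$.

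The key structural point is that $\bar S$ arises from a root system. Selecting the two norm-$2$ vectors on each line of $\bar S$ yields a set $R$; for $\mathbf{a},\mathbf{b}\in R$ with $(\mathbf{a},\mathbf{b})=1$ the reflection $\mathbf{b}\mapsto\mathbf{b}-(\mathbf{a},\mathbf{b})\mathbf{a}=\mathbf{b}-\mathbf{a}$ again has norm $2$ and, by star-closure, lies on a line of $\bar S$, hence in $R$. Thus $R$ is closed under the reflections in its own members, with all inner products in $\{0,\pm1,\pm2\}$, so $R$ is a reduced, crystallographic, simply-laced root system. I would then classify the indecomposable such systems, the answer being exactly $A_n$ $(n\geqslant1)$, $D_n$ $(n\geqslant4)$, $E_6$, $E_7$, and $E_8$. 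The listed inclusions $A_n\subseteq D_{n+1}$ and $E_6\subseteq E_7\subseteq E_8$ then collapse the possibilities, so $\bar S$, and a fortiori $S$, is represented by a subset of some $D_n$ or of $E_8$, which is the assertion.

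The main obstacle is the root-system classification invoked above: one must show that, beyond the $A_n$ and $D_n$ families, only the three exceptional systems occur, and in particular that $E_8$ caps off the exceptional direction so that no larger indecomposable simply-laced system exists. This is the genuinely hard part, to be carried out either by reproving the simply-laced case of the Cartan--Killing classification or by the direct combinatorial analysis of how stars can be packed together, controlling the admissible local configurations of lines and bounding the ambient dimension in the non-$D_n$ case.
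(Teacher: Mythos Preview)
The paper does not give its own proof of this statement: Theorem~\ref{thm:CGSS} is quoted as a classical result of Cameron, Goethals, Shult, and Seidel \cite{CGSS}, with no argument supplied beyond the citation. So there is nothing in the paper to compare your proposal against.

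As for the soundness of your sketch, it is essentially the standard modern account of the CGSS theorem via root systems: pass from the positive semidefinite Gram matrix to a line system at angles $60^\circ$ and $90^\circ$, take the star-closure, observe that the resulting set of norm-$2$ vectors is reflection-closed and hence a simply-laced root system, and invoke the ADE classification. You are right that indecomposability follows from connectedness of $G$, and that the inclusions $A_n\subset D_{n+1}$ and $E_6\subset E_7\subset E_8$ reduce the outcome to the dichotomy claimed. You are also right to flag the classification itself as the real work; your outline does not attempt it, and a complete proof would need either the Cartan--Killing argument for the simply-laced case or the direct line-system analysis carried out in \cite{CGSS}. One small point worth making explicit is that star-closure preserves indecomposability and stays inside the same ambient space (so the representation of $G$ is not disturbed), but this is routine.
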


Let $G$ be an edge-signed graph represented by a line system $S$.
If $S$ embeds into $\Z^n$ for some $n$, then we say that $G$ is 
\textbf{integrally represented} or that $G$ has an 
\textbf{integral representation}.
By Theorem~\ref{thm:CGSS}, for an edge-signed graph $G$ with
$\lambda_1(G) \geqslant -2$, $G$ has an integral representation if and only if 
$G$ is represented by a subset of $D_n$ for some $n$,
or equivalently, $G$ is represented by $D_\infty$, in
the sense of \cite{EJC1987}. 
We record this observation as the following corollary.

\begin{corollary}\label{cor:integralReps}
	Let $G$ be a connected edge-signed graph with $\lambda_1(G) \geqslant -2$.
	Then $G$ has no integral representation if and only if 
	$G$ is represented by a subset of $E_8$
	but not by a subset of $D_n$ for any $n$.
\end{corollary}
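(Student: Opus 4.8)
The plan is to combine Theorem~\ref{thm:CGSS} with the equivalence recorded just before the statement, namely that a connected edge-signed graph $G$ with $\lambda_1(G)\geqslant-2$ has an integral representation if and only if it is represented by a subset of $D_n$ for some $n$. Taking contrapositives, $G$ has no integral representation precisely when it is represented by a subset of $D_n$ for no value of $n$. The whole task then reduces to showing that, for such a $G$, being represented by a subset of $E_8$ is automatic, and conversely that the $E_8$-clause is harmless.

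For the forward implication I would assume $G$ has no integral representation. By the equivalence above, $G$ is not represented by a subset of $D_n$ for any $n$. Since $G$ is connected with $\lambda_1(G)\geqslant-2$, Theorem~\ref{thm:CGSS} guarantees that $G$ is represented by a subset of $D_n$ or of $E_8$; the first alternative having been excluded, $G$ must be represented by a subset of $E_8$. Together with the exclusion of all the $D_n$, this is exactly the right-hand condition. For the converse, suppose $G$ is represented by a subset of $E_8$ but by no subset of $D_n$. Feeding the second half of this hypothesis into the equivalence above immediately gives that $G$ has no integral representation; the $E_8$-clause is not even needed in this direction. Hence the two conditions are equivalent.

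There is essentially no obstacle once the preparatory equivalence is in hand, and it is worth being explicit that the only genuine mathematical content sits inside that equivalence. Its $(\Leftarrow)$ direction uses $D_n\subseteq\Z^n$, while its $(\Rightarrow)$ direction rests on the fact that the integer vectors of norm $2$ are exactly the $\pm\mathbf{e}_i\pm\mathbf{e}_j$, so that any norm-$2$ line system embedded in $\Z^n$ must lie in $D_n$. Granting that, the corollary is a routine bookkeeping argument between the two characterizations supplied by Theorem~\ref{thm:CGSS}.

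The one point that requires care is not to conflate ``$G$ is represented by a subset of $E_8$'' with ``$G$ is represented \emph{only} by subsets of $E_8$.'' Since $D_8\subseteq E_8$, many graphs admit both an $E_8$-representation and a $D_n$-representation, and it is precisely the failure of the latter — not the mere existence of the former — that detects the absence of an integral representation. Keeping the two clauses of the right-hand condition separate in the argument above is what makes the equivalence come out correctly.
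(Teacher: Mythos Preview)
Your argument is correct and matches the paper's own treatment: the paper does not even write out a separate proof, but simply records the corollary as an immediate observation following the sentence that $G$ has an integral representation if and only if it is represented by a subset of $D_n$ for some $n$, combined with Theorem~\ref{thm:CGSS}. Your explicit unpacking of the two implications and your remark about not conflating ``represented by $E_8$'' with ``represented only by $E_8$'' are accurate and, if anything, more careful than the paper.
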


Corollary~\ref{cor:integralReps} motivates our next definition. 
Let $G$ be a connected edge-signed graph with $\lambda_1(G) \geqslant -2$.
We call $G$ \textbf{exceptional} if it does not have an integral representation.
Clearly there are only finitely many exceptional edge-signed graphs.

\section{Classification of edge-signed graphs with $\lambda_1 > -2$}
\label{sec:class}

In this section we classify integrally represented edge-signed graphs with smallest eigenvalue greater than $-2$.
We leave the exceptional case until Section~\ref{sec:ex.grph}.

\begin{lemma}\label{lem:signed-cycle}
Let $G$ be an edge-signed graph whose underlying graph is a cycle.
Then $\lambda_{1}(G)>-2$ if and only if the number of $(+)$-edges of
$G$ is odd.
\end{lemma}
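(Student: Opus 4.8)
The plan is to exploit the fact that $\lambda_1(G) > -2$ is equivalent to the positive definiteness of $M := A(G) + 2I$, and to analyse this matrix directly via a sum-of-squares identity. First I would fix notation: write the cycle as $v_1 v_2 \cdots v_n v_1$ and let $s_i \in \{+1,-1\}$ denote the sign of the edge $\{v_i, v_{i+1}\}$, with indices read modulo $n$, so that the number of $(+)$-edges is $p = \#\{i : s_i = +1\}$ and the number of $(-)$-edges is $q = n - p$.

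The key step is the identity
\[
	\mathbf{x}^{T} M \mathbf{x}
	= 2\sum_{i=1}^{n} x_i^2 + 2\sum_{i=1}^{n} s_i x_i x_{i+1}
	= \sum_{i=1}^{n} (x_i + s_i x_{i+1})^2,
\]
valid because $s_i^2 = 1$ and, as the cyclic index runs over all $n$ edges, each square $x_j^2$ is counted exactly twice. This immediately shows that $M$ is positive semidefinite (so $\lambda_1(G) \geqslant -2$ always holds for a signed cycle) and reduces the problem to deciding when the form is \emph{definite}: one has $\lambda_1(G) > -2$ if and only if $\mathbf{x}^{T} M \mathbf{x} = 0$ forces $\mathbf{x} = 0$.

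Next I would solve the degenerate case. The form vanishes exactly when $x_i + s_i x_{i+1} = 0$ for every $i$, i.e. $x_{i+1} = -s_i x_i$. Iterating this recursion around the cycle gives $x_1 = (-1)^n \left(\prod_{i=1}^{n} s_i\right) x_1$; and since $x_1 = 0$ would propagate to $x_i = 0$ for all $i$, a nonzero null vector exists if and only if $(-1)^n \prod_{i=1}^{n} s_i = 1$. Because $\prod_{i} s_i = (-1)^q$, this condition reads $(-1)^{n+q} = 1$, i.e. $n + q$ is even, which (as $p = n - q$) is equivalent to $p$ being even. Hence $M$ is singular precisely when $p$ is even, so $\lambda_1(G) = -2$ if and only if $p$ is even, and therefore $\lambda_1(G) > -2$ if and only if $p$ is odd, as claimed.

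The only points requiring care are the bookkeeping in the sum-of-squares identity (each vertex variable appearing in two edge terms) and the observation that a null vector with $x_1 = 0$ must be identically zero; beyond that the argument is an elementary closed-walk consistency check, so I anticipate no serious obstacle.
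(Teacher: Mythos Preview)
Your argument is correct. The sum-of-squares identity $\mathbf{x}^{T}(A(G)+2I)\mathbf{x}=\sum_{i}(x_i+s_i x_{i+1})^2$ is valid as written, the cyclic consistency condition $(-1)^{n}\prod_i s_i=1$ is derived correctly, and the parity bookkeeping $(-1)^{n+q}=1 \Leftrightarrow p$ even is sound.

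The paper takes a different route: it uses switching equivalence to reduce to canonical forms. When $p$ is even, it switches to the all-$(-)$ cycle and quotes $\lambda_1=-2$; when $p$ is odd it splits on the parity of $n$, switching either to an unsigned odd cycle (known to have $\lambda_1>-2$) or, for even $n$, exhibiting an explicit nonsingular $n\times n$ matrix $B$ with $B^{\top}B=A(G)+2I$. Your proof is more uniform: the single sum-of-squares identity handles all sign patterns and cycle lengths at once, with no appeal to switching or case analysis. In fact your identity is precisely the factorisation $A(G)+2I=B^{\top}B$ spelled out as a quadratic form, so the two arguments are cousins; yours simply avoids reducing to a canonical representative first. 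The paper's switching viewpoint, on the other hand, dovetails with the machinery used in the subsequent classification (Theorem~\ref{thm:3}), where switching and representation graphs are the main tools.
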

\begin{proof}
If the number of $(+)$-edge is even, then $G$ is switching equivalent
to the edge-signed cycle in which all edges are $(-)$-edges,
hence $\lambda_{1}(G)=-2$. Conversely, suppose $G$ has an odd
number of $(+)$-edges. If the length of $G$ is odd, then $G$ has an
even number of $(-)$-edges, hence $G$ is switching equivalent to
an unsigned cycle. Thus $\lambda_{1}(G)>-2$. 
If the length of $G$ is even, then, up to switching, $A(G) = B^\top B - 2I$ where 
\[
B = \begin{pmatrix}
1&0&\cdots&0&-1\\
1&1&\ddots&&0\\
0&1&1&\ddots&\vdots\\
\vdots&&\ddots&\ddots&0\\
0&\cdots&\cdots&1&1
\end{pmatrix}.
\]
Since this matrix is nonsingular, $\lambda_{1}(G)>-2$. 
\end{proof}

Let $G$ be a graph with an orientation assigned to its edges.
We define the \textbf{oriented incidence matrix} $B = B(G)$ of $G$ to be the $\{0, \pm 1 \}$-matrix whose rows and columns are indexed by $V(G)$ and $E(G)$ respectively such that the entry $B_{ve}$ is equal to $1$ if $v$ is the head of $e$, $-1$ if $v$ is the tail of $e$, and $0$ otherwise.
See \cite{Godsil:AlgGraph} for properties of oriented incidence matrices.

Let $G$ be an edge-signed graph with smallest eigenvalue 
greater than 
$-2$. Assume that $G$ has an integral representation $\phi$.
This means that, with $m=|V(G)|$,
there exists an $n\times m$ matrix 
\[
M=\begin{pmatrix} \mathbf{v}_1&\cdots&\mathbf{v}_m\end{pmatrix},
\]
with entries in $\Z$, such that 
$(\mathbf{v}_i,\mathbf{v}_j) = \pm 1$ if $\{i,j\} \in E^\pm (G)$ respectively, and
$(\mathbf{v}_i,\mathbf{v}_j)= 2\delta_{i,j}$ otherwise. 
We may assume that $M$ has no rows consisting only of zeros.
Since $\mathbf{v}_i\in\Z^n$, $\mathbf{v}_i$ has two entries equal to
$\pm1$, and all other entries $0$. This means that we can regard
$M$ as an oriented incidence matrix of a graph $H$ and the underlying
graph of $G$ is the line graph of $H$. 
More precisely, $H$ is a graph with $n$ vertices, and the vertices $i$ and $j$ are joined by the edge $k$ whenever $\{i,j\}=\supp(\mathbf{v}_k)$.
Note that the graph $H$ may have multiple edges.
A graph without multiple edges is called \textbf{simple}.
We call $H$ the \textbf{representation graph} of $G$ associated with the
representation $\phi$.
Note that $H$ has no isolated vertex. If $G$ is connected, then so is $H$.

\begin{lemma}\label{lem:1}
Let $G$ be an $m$-vertex connected edge-signed graph having an integral representation $\phi$ and smallest eigenvalue greater than $-2$.
Let $H$ be the $n$-vertex representation graph of $G$ associated with the
representation $\phi$.
Then $n\in\{m,m+1\}$. 
Moreover, if $n=m$, then $H$ is a unicyclic graph or a tree with a double edge and if $n=m+1$, then $H$ is a tree.
\end{lemma}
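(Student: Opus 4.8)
The plan is to derive the whole statement from just two facts: the positive definiteness of $M^\top M$ and the connectivity of $H$. First I would observe that, directly from the defining property of the integral representation $\phi$, the Gram matrix of the columns $\mathbf{v}_1,\dots,\mathbf{v}_m$ of $M$ is $M^\top M = A(G)+2I$. Since $\lambda_1(G) > -2$, every eigenvalue of $A(G)+2I$ is positive, so $M^\top M$ is positive definite and in particular nonsingular; hence the $m$ columns of $M$ are linearly independent and $\operatorname{rank} M = m$. As $M$ has exactly $n$ rows, $\operatorname{rank} M \leqslant n$, which gives the lower bound $n \geqslant m$.

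For the matching upper bound I would use that $H$ is connected (connectivity of $G$ forces connectivity of $H$, as already noted) with $n$ vertices and $m$ edges, the edges being in bijection with the columns of $M$. Any connected (multi)graph on $n$ vertices has at least $n-1$ edges, so $m \geqslant n-1$, that is $n \leqslant m+1$. Combining the two inequalities yields $m \leqslant n \leqslant m+1$, i.e. $n \in \{m, m+1\}$.

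It then remains to read off the shape of $H$ in each case. If $n = m+1$, then $H$ is connected with exactly $n-1$ edges, hence a tree. If $n = m$, then $H$ is connected with $n$ vertices and $n$ edges, so its cyclomatic number $m-n+1$ equals $1$ and $H$ contains exactly one cycle. Because each $\mathbf{v}_k$ has support of size exactly two, $H$ has no loops, so this unique cycle has length at least two: a cycle of length two is a double edge, and deleting one of the two parallel edges leaves a tree, giving the ``tree with a double edge'' case; a cycle of length at least three makes $H$ unicyclic.

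The only genuine content lies in the first step, where $\lambda_1(G) > -2$ is converted into full column rank of $M$ and thence into $n \geqslant m$; the upper bound and the case analysis are elementary once one argues in the category of multigraphs. The point that most warrants care is the multigraph bookkeeping when $n = m$: one must explicitly exclude loops (handled by the support-size-two observation) and separate the length-two cycle, which produces a double edge, from longer cycles, since the statement distinguishes a tree with a double edge from a unicyclic graph.
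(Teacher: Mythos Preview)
Your proof is correct and follows essentially the same line as the paper's: positive definiteness of $M^\top M=A(G)+2I$ gives $\operatorname{rank} M=m$ and hence $n\geqslant m$, while connectivity of $H$ forces $n\leqslant m+1$. You have simply spelled out in full the ``Moreover'' part (tree when $n=m+1$; unicyclic or tree-with-double-edge when $n=m$), which the paper leaves implicit.
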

\begin{proof}
Since $M^\top M=A(G)+2I$ is positive definite, $M$ has rank $m$. This implies
$n\geqslant m$. If $H$ is disconnected, then so is $G$, which is absurd. Thus
$H$ is connected, which forces $n\leqslant m+1$.
\end{proof}

Let $H$ be a unicyclic graph whose unique cycle $C$ has at least $4$ vertices and let $G = \LG(H)$.
Then for each edge $e$ of $G$ there exists a unique 
maximal clique 
that contains $e$.
For such a graph $G$, we denote by $\mathfrak C_G(e)$ the unique maximal clique of $G$ containing the edge $e$. 
Let $uu'$ be an edge of $\LG(C)$.
Define $\LG^\dag(H,uu^\prime)$ to be the edge-signed graph $(V,E^{+},E^-)$, where $V = V(\LG(H))$, 
\[
E^- = \left \{ uv \in E(\LG(H)) \mid v \in \mathfrak C_G(uu^\prime) \right \}
\]
and $E^+ = E(\LG(H)) \backslash E^-$.
Observe that, for all edges $uu^\prime$ and $vv^\prime$ of $\LG(C)$, the graph $\LG^\dag(H,uu^\prime)$ is switching equivalent to $\LG^\dag(H,vv^\prime)$.
%

\begin{theorem}\label{thm:3}
Let $G$ be a connected integrally represented edge-signed graph having smallest eigenvalue greater than $-2$.
Let $H$ be the representation graph of $G$ for some integral representation.
Then one of the following statements holds:
\begin{enumerate}
\item
$H$ is a simple tree or $H$ is unicyclic with an odd cycle, and
$G$ is switching equivalent to the line graph $\LG(H)$, 
\item
$H$ is unicyclic with an even cycle $C$,
and $G$ is switching equivalent to 
$\LG^\dag(H,uu^\prime)$
where $uu'$ is an edge of $\LG(C)$.
\item 
$H$ is a tree with a double edge, and 
$G$ is switching equivalent to 
the edge-signed graph obtained from the line graph $\LG(H)$, 
by attaching a new vertex $u'$, and join $u'$ by $(+)$-edges to 
every vertex of a clique in the neighbourhood of $u$,
$(-)$-edges to every vertex of the other clique in the neighbourhood of $u$,
where $u$ is a fixed vertex of $\LG(H)$.
\end{enumerate}
Conversely, if $G$ is an edge-signed graph described by
{\rm(i)}, {\rm(ii)}, or {\rm(iii)} above, then $G$ is integrally represented and has smallest
eigenvalue greater than $-2$.
\end{theorem}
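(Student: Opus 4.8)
The plan is to translate the spectral condition into a rank condition on $M$ and then read the sign structure of $G$ off the incidence structure of $H$. Since $A(G)+2I=M^\top M$, we have $\lambda_1(G)>-2$ if and only if $M^\top M$ is positive definite, i.e. $M$ has full column rank $m$. Two elementary observations drive everything: switching $G$ at the vertex corresponding to a column of $M$ negates that column (conjugating $M^\top M$ by a diagonal sign matrix, which is exactly switching of $A(G)$), while negating a row of $M$ is an orthogonal change of basis that fixes $M^\top M$; and for two edges $k,\ell$ of $H$ meeting at a vertex $v$, the sign of the edge $\{k,\ell\}$ in $G$ is $(\mathbf v_k,\mathbf v_\ell)=M_{vk}M_{v\ell}$. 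By Lemma~\ref{lem:1} there are exactly three cases for $H$ --- a tree, a unicyclic graph, or a tree with a double edge --- which I would treat in turn.

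For the tree case I would use that the line graph of a tree is a block graph whose blocks are the cliques $K_v$ $(v\in V(H))$, so every cycle of $\LG(H)$ lies inside a single clique $K_v$. Around such a cycle the sign $\prod M_{vk_t}M_{vk_{t+1}}$ telescopes to $\prod M_{vk_t}^2=1$, so every cycle of $G$ is positive; since a switching class is determined by its cycle signs, $G$ is switching equivalent to the all-positive signing $\LG(H)$. The same local computation shows that for \emph{any} $H$ the cliques contribute only positive cycles, so the whole switching class of $G$ is controlled by the sign of the one extra cycle coming from a cycle $C$ of $H$.

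The key bookkeeping step, which I would isolate as a lemma, is the identity relating the sign $\sigma_H$ of a cycle $C=(v_1,\dots,v_\ell)$ of $H$ (the product of $-M_{v_te_t}M_{v_{t+1}e_t}$ over its edges $e_t$) to the sign $\sigma_G$ of the induced cycle $\LG(C)$ in $G$: a telescoping computation gives $\sigma_G=(-1)^\ell\sigma_H$. Full column rank of $M$ forces this cycle to be ``unbalanced'', $\sigma_H=-1$ (otherwise the alternating vector around $C$ lies in the kernel of $M$), hence $\sigma_G=(-1)^{\ell+1}$. When $\ell$ is odd this gives $\sigma_G=+1$, so again every cycle of $G$ is positive and $G\sim\LG(H)$, which is case~(i). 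When $\ell$ is even, $\sigma_G=-1$ and $\LG(C)$ is the unique negative cycle; here I would verify that $\LG^\dag(H,uu')$ has exactly this signature --- its negative edges join $u$ to the rest of the clique $\mathfrak C_G(uu')$, so each clique triangle through $u$ has two negated edges and stays positive, while $\LG(C)$ picks up a single sign change --- whence $G\sim\LG^\dag(H,uu')$, case~(ii). For the double-edge case the two parallel edges $e,e'$ share \emph{both} endpoints $p,q$, so $(\mathbf v_e,\mathbf v_{e'})=M_{pe}M_{pe'}+M_{qe}M_{qe'}\in\{0,\pm2\}$; the requirement that off-diagonal entries lie in $\{0,\pm1\}$ forces this to be $0$, i.e. the digon is unbalanced and $e,e'$ are non-adjacent in $G$. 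Deleting $e'$ leaves a tree, handled as above, and $e'$ reappears as a twin of $e$ attached with opposite signs to the two cliques around $e$; after switching this is exactly the graph of case~(iii).

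For the converse I would run each construction backwards: write down the incidence matrix $M$ of the relevant $H$ (oriented for a tree; with a single non-oriented edge making the odd cycle, the even cycle, or the digon unbalanced), check by the sign rule that $M^\top M=A(G)+2I$ up to switching, and note that $M$ is nonsingular (for $n=m$ the unbalanced cycle kills the only candidate kernel vector; for $n=m+1$ a tree incidence matrix always has independent columns), so $\lambda_1(G)>-2$. The main obstacle I anticipate is the even-cycle case~(ii): one must check that the triangles in the vertex-cliques together with $\LG(C)$ generate the cycle space of $\LG(H)$, so that agreement of cycle signs really does pin down the switching class, and then confirm that $\LG^\dag(H,uu')$ is independent of the chosen edge $uu'$ and lands in the correct class. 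The digon computation in case~(iii) and the verification that the hanging-tree part is always switchable to all-positive are comparatively routine once the cycle-sign identity is in hand.
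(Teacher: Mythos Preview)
Your argument is correct and follows a genuinely different route from the paper's. Both proofs start from Lemma~\ref{lem:1} and split into the same three cases, but the mechanisms are distinct.

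The paper handles the tree case by a direct switching argument: since no triangle of $G$ can be switching equivalent to an all-$(-)$ triangle, one pushes the $(-)$-edges towards an end block until none remain. The unicyclic case is then done by induction on the number of vertices outside the cycle, with Lemma~\ref{lem:signed-cycle} supplying the base case; the double-edge case is reduced to the tree case by deleting one of the parallel edges. The converse is proved by an explicit determinant computation, showing $\det(M^\top M)=4$ inductively.

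Your approach instead rests on the standard fact (Zaslavsky) that a switching class is determined by the signs of its cycles, together with the telescoping identity for the sign of a clique-cycle and your identity $\sigma_G=(-1)^\ell\sigma_H$ for the line-cycle. The rank condition $\ker M=0$ then forces the unique cycle of $H$ to be unbalanced, which immediately gives the parity dichotomy between (i) and (ii) without any induction. Your treatment of the double edge via $(\mathbf v_e,\mathbf v_{e'})\in\{0,\pm2\}\cap\{0,\pm1\}=\{0\}$ is the same reduction as the paper's, phrased more intrinsically.

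What each buys: the paper's argument is entirely self-contained and needs no outside facts about signed graphs, at the cost of a somewhat ad hoc induction. Your argument is shorter and more conceptual, and it makes transparent \emph{why} the parity of the cycle in $H$ governs the answer (rank $\Leftrightarrow$ unbalanced cycle $\Leftrightarrow$ sign of $\LG(C)$). The point you flag as the main obstacle---that the clique-triangles together with $\LG(C)$ generate the cycle space of $\LG(H)$---is routine once you note that $\LG(H)$ for $H$ unicyclic has a block decomposition into the cliques $K_v$ for $v\notin C$ and a single block containing $\LG(C)$ and the $K_v$ for $v\in C$; a spanning tree of that block can be chosen so that the fundamental cycles are the clique-triangles plus $\LG(C)$. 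You should state the cycle-sign characterisation of switching equivalence explicitly and cite it, since the paper does not.
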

\begin{proof}
	By Lemma~\ref{lem:1}, we can divide the proof into three cases.
	
	\paragraph{Case 1: $H$ is a simple tree} 
	\label{par:case_1_h_is_a_tree}
	Since $G$ has smallest eigenvalue greater than $-2$, $G$ cannot contain
	a triangle switching equivalent to one with three $(-)$-edges.
	By repeatedly applying switching, one can move
	the locations of $(-)$-edges toward an end block, and eventually 
	end up with an unsigned graph. Therefore, $G$ is switching equivalent
	to $\LG(H)$.
	
	\paragraph{Case 2: $H$ is unicyclic} 
	\label{par:case_2_h_is_unicyclic}
	We prove either (i) or (ii) holds
	by induction on the number of vertices of $H$ minus the length
	of the cycle in $H$. First suppose that $H$ is a cycle.
	By Lemma~\ref{lem:signed-cycle}, $G$ is either an odd cycle with an 
	even number of $(-)$-edges, or $G$ is an even cycle with odd
	number of $(-)$-edges. In the former case, $G$ is switching equivalent
	to an unsigned odd cycle.
	In the latter case, $G$ is
	switching equivalent to an even cycle with one $(-)$-edge and the clique of the underlying graph of $G$ containing the $(-)$-edge is then switching equivalent to the one described in (ii).

	Now suppose that $H$ is not a cycle. 
	Then $H$ has a vertex
	$v$ of degree one, adjacent to a vertex $w$.
	Let $\phi$ be the integral representation of $G$ to which
	$H$ is associated.
	Let $H'$ be the graph obtained from $H$
 	by removing the vertex $v$, and let $G^\prime$ be the graph obtained from $G$ by removing the vertex $x$ corresponding to the edge $vw$ in $H$.
	Clearly $H^\prime$ is the representation graph of $G^\prime$ associated to $\phi$ restricted to $G^\prime$ and, by induction, $H^\prime$ and $G^\prime$ satisfy either (i) or (ii).
	If $H^\prime$ and $G^\prime$ satisfy (i) then, since $H$ does not have any double edges, each nonzero entry $M_{w,y}$ (for $y \in V(G^\prime)$) has the same sign.
	On the other hand, if $H^\prime$ and $G^\prime$ satisfy (ii), that is, $G^\prime$ is switching equivalent to $\LG^\dag(H^\prime,uu^\prime)$ then, without loss of generality we can assume that $w$ is not incident to the edge $u$ of $H$.
	Now, again, since $H$ does not have any double edges, we observe that each nonzero entry $M_{w,y}$ (for $y \in V(G^\prime)$) has the same sign.
	Since $v$ has degree one, we are free to switch the vertex $x$ so that either (i) or (ii) holds for $G$ and $H$. 
	
	\paragraph{Case 3: $H$ is a tree with a double edge} 
	\label{par:case_3_h_has_a_double_edge}
		If $H$ has a double edge, then the matrix $M$ has a submatrix
		\[
		\begin{pmatrix} 1&1\\ -1&1\end{pmatrix}.
		\]
		Let $u'$ (resp. $u$) denote the vertex of $G$ corresponding to
		the first (resp. second) column of this matrix (which in turn
		corresponds to a column of $M$), and let
		$v^+$ (resp. $v^-$) denote the vertex of $H$ corresponding to
		the first (resp. second) row of this matrix (which in turn
		corresponds to a row of $M$).
		Let $G'=G-u'$. Then the graph $H'$ obtained from $G'$ is 
		a tree. We have already shown that, in this case, we may take
		$G'$ to be the unsigned line graph of $H'$. Let
		$K^+$ (resp. $K^-$) be the clique of $G$ in the neighbourhood of
		$u$ consisting of vertices $u''$ with $M_{v^+,u''}=1$
		(resp. $M_{v^-,u''}=1$). Then in the graph $G$, $u'$ is joined
		to every vertex of $K^+$ by $(+)$-edges, and $u'$ is joined
		to every vertex of $K^-$ by $(-)$-edges. Therefore (iii) holds.
	

Conversely, suppose $G$ is described by (i), (ii), or (iii).
First, we describe how to construct $M$ for each case.
For (i), $M$ is the incidence matrix of $H$.
For (ii), let $v$ be the vertex incident to both the edges $u$ and $u^\prime$ of $H$.
Then $M$ is the incidence matrix of $H$ adjusted so that $M_{v,u}=-1$.
For (iii), let $v$ and $w$ be incident to the edge $u$ in $H$.
Then $M$ is the incidence matrix of $H$ together with an extra column for $u^\prime$ with $M_{v,u^\prime}=1$, $M_{w,u^\prime}=-1$, and the remaining entries $0$.

To prove the converse, it suffices to show that, in each case, $M^\top M$ is positive definite.
If $G$ is the line graph of a tree then this is well known.
Thus we can immediately restrict our attention to when $n = m$.
We will show that, in both remaining cases, $M^\top M$ has determinant $4$.
We inductively show that $\det(M^\top M) = 4$ for $H$ unicyclic.
Suppose that the underlying graph of $G$ is the line graph of a unicyclic graph.
If $H$ is a cycle then, by Lemma~\ref{lem:signed-cycle}, $M$ is nonsingular.
Hence the rows of $M$ are a basis for $D_n$, which has discriminant $4$.
Thus $M$ has determinant $\pm 2$.
Otherwise, $H$ has a vertex $v$ of degree $1$.
Let $M^\prime$ be a the matrix obtained by removing $v$.
Then $\det(M) = \pm\det(M^\prime)$.
Hence $M^\top M$ is positive definite, as required.
The same inductive approach can be applied when starting with the double-edge where $M$ is the matrix
\[
\begin{pmatrix} 1&1\\ -1&1\end{pmatrix},
\]
which has determinant $2$.
\end{proof}

Note that, if $G$ is represented by the line system $A_n$, then
one can relax the assumption of Theorem~\ref{thm:3} to having
smallest eigenvalue \emph{at least} $-2$.
Ishihara~\cite{Ishihara} shows that, in this case, the
underlying graph of $G$ is a claw-free block graph.

\section{Hoffman's conjecture}
\label{sec:Hoffcon}

In this section we settle Hoffman's conjecture, i.e., we prove Theorem~\ref{thm:Hoffman}.
Moreover, we prove a stronger version of Hoffman's conjecture extended to edge-signed graphs.


%

\begin{lemma}\label{lem:v1zero}
Let $A=(a_{i,j})$ be a real symmetric matrix, and let $A'=(a'_{i,j})$
be the matrix defined by $a'_{i,j}=a_{i,j}-\delta_{i,1}\delta_{j,1}$.
Suppose there exists an eigenvector $\mathbf{x}$ of $A$ belonging to the
eigenvalue $\lambda_1(A)$ with $x_1\neq0$. Then
$\lambda_1(A) > \lambda_1(A^\prime)$.
\end{lemma}
\begin{proof}
We may assume without loss of generality that $\norm{\mathbf{x}} = 1$.
Then $\lambda_1(A) = 
\mathbf{x}^\top A \mathbf{x}=
\mathbf{x}^\top A^\prime \mathbf{x} + x_1^2 
\geqslant \lambda_1(A^\prime) + x_1^2>\lambda_1(A^\prime)$.
\end{proof}

\begin{remark}
One might wonder if Theorem~\ref{thm:Hoffman} can be proved by
showing that the adjacency matrix of $\LG(T)$ satisfies the
assumption of Lemma~\ref{lem:v1zero} when we take the first
entry to correspond to an end-edge of a tree $T$. This approach, however,
does not work. In fact, let $T$ be the Dynkin diagram $E_6$,
and let the first entry of $A$ correspond to the unique end
edge attached to the vertex of degree $3$. Then the smallest
eigenvalue $-(\sqrt{5}+1)/2$ of 
$\LG(T)$ has multiplicity $1$, and the 
eigenvector has $0$ in its first entry.
Hence, $E_6$ is an example of a graph to which we cannot apply Lemma~\ref{lem:v1zero}.
\end{remark}

\begin{lemma}\label{lem:LGtoL}
	Let $M$ be an $n \times m$ real matrix.
	Then $M^\top M$ and $MM^\top$ have the same nonzero eigenvalues
	(including multiplicities). More explicitly, for any nonzero
	eigenvalue $\theta$ of $M^\top M$, 
the multiplication by $M$ gives a linear map from
$\kernel(M^\top M-\theta I)$ to $\kernel(MM^\top-\theta I)$ whose
inverse is given by $\mathbf{v}\mapsto\theta^{-1}M^\top\mathbf{v}$.
\end{lemma}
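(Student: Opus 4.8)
The plan is to prove the two assertions separately but with the second doing most of the work. First I would establish the explicit inverse map, since once that is in place the equality of nonzero eigenvalues with multiplicities follows almost immediately. So the main task is to verify that multiplication by $M$ really does carry $\kernel(M^\top M - \theta I)$ into $\kernel(MM^\top - \theta I)$, and that $\mathbf{v} \mapsto \theta^{-1} M^\top \mathbf{v}$ is a genuine two-sided inverse of this map.

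For the forward direction, let $\mathbf{x} \in \kernel(M^\top M - \theta I)$, so $M^\top M \mathbf{x} = \theta \mathbf{x}$. Applying $M$ on the left gives $M M^\top (M\mathbf{x}) = M(\theta \mathbf{x}) = \theta (M\mathbf{x})$, which says exactly that $M\mathbf{x} \in \kernel(MM^\top - \theta I)$. By symmetry, the same computation with $M$ and $M^\top$ interchanged shows that multiplication by $M^\top$ sends $\kernel(MM^\top - \theta I)$ into $\kernel(M^\top M - \theta I)$. It then remains to check the composition formulas: for $\mathbf{x} \in \kernel(M^\top M - \theta I)$ we have $\theta^{-1} M^\top (M\mathbf{x}) = \theta^{-1} (M^\top M \mathbf{x}) = \theta^{-1} \theta \mathbf{x} = \mathbf{x}$, and likewise $M(\theta^{-1} M^\top \mathbf{v}) = \theta^{-1} M M^\top \mathbf{v} = \mathbf{v}$ for $\mathbf{v} \in \kernel(MM^\top - \theta I)$. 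Here the hypothesis $\theta \neq 0$ is essential, as it is what lets us invert the scalar and recover the original vector; this is the one place where nonzero-ness of the eigenvalue is used.

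Having produced mutually inverse linear maps between the two eigenspaces, I would conclude that these eigenspaces are isomorphic, hence have equal dimension. Since this holds for every nonzero $\theta$, it establishes that $M^\top M$ and $MM^\top$ share the same nonzero eigenvalues with identical multiplicities, which is the first sentence of the lemma. The second sentence is just the more precise statement whose proof we have already carried out.

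I expect essentially no obstacle here: the result is the standard fact that $M^\top M$ and $MM^\top$ are isospectral away from zero, and the only subtlety is bookkeeping the role of $\theta \neq 0$ when writing down the inverse. The computation is purely formal matrix manipulation, so the writeup should be short; the main point worth stating clearly is simply that the two displayed maps compose to the identity in both orders.
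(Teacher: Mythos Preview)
Your argument is correct and complete: the verification that $M$ and $\theta^{-1}M^\top$ are mutually inverse linear maps between the two eigenspaces is exactly right, and the equality of multiplicities follows. The paper itself does not write out a proof at all but simply cites \cite[Lemma~2.9.2]{Brouwer:SpectraGraphs}; your write-up supplies precisely the standard computation that reference contains, so there is nothing to compare beyond noting that you have made explicit what the paper leaves to a citation.
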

\begin{proof}
	Follows from 
	\cite[Lemma 2.9.2]{Brouwer:SpectraGraphs}.
\end{proof}

It is easy to see that if $G$ is a bipartite graph then there exists an orientation of $G$ such that the oriented incidence matrix $B$ of $G$ satisfies $B^\top B = A(\LG(G)) + 2I$.

\begin{lemma}\label{lem:LT}
Let $G$ be a connected bipartite graph on $m$ vertices and $n$ edges,
with oriented incidence matrix $B$ satisfying $B^\top B=A+2I$ where $A$ is the adjacency matrix of the line graph of $G$,
and let $L$ be the Laplacian matrix of $G$.
Then for each $i\in\{2,\dots,m\}$,
$B\kernel(A-\lambda_{i+n-m}(A)I)=\kernel(L-\lambda_{i}(L) I)$
and 
$\kernel(A-\lambda_{i+n-m}(A)I)=B^\top\kernel(L-\lambda_{i}(L) I)$.
\end{lemma}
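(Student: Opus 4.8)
The plan is to reduce the statement to Lemma~\ref{lem:LGtoL} applied to the matrix $M=B^\top$, once the two products $MM^\top$ and $M^\top M$ have been identified. First I would note that $B^\top B=A+2I$ holds by hypothesis and that $BB^\top=L$: the $(u,v)$-entry of $BB^\top$ equals $\sum_e B_{u,e}B_{v,e}$, which is the degree of $u$ when $u=v$ and is $-1$ when $u$ and $v$ are adjacent (one of them being the head and the other the tail of the joining edge), so $BB^\top=D-A_G=L$, where $D$ is the diagonal degree matrix and $A_G$ the adjacency matrix of $G$. Hence, taking $M=B^\top$, we have $M^\top M=BB^\top=L$ and $MM^\top=B^\top B=A+2I$.

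Next I would pin down how the eigenvalues of $L$ and of $A$ correspond. Because $G$ is connected, the kernel of $B^\top$ is spanned by the all-ones vector (for an edge $e$ with head $h$ and tail $t$ one has $(B^\top\mathbf{x})_e=x_h-x_t$, which vanishes for every $e$ exactly when $\mathbf{x}$ is constant), so $\operatorname{rank}B=m-1$. Consequently $L$ has $0$ as a simple eigenvalue, $\lambda_1(L)=0<\lambda_2(L)\leqslant\cdots\leqslant\lambda_m(L)$, while the positive semidefinite matrix $A+2I=B^\top B$ has rank $m-1$, hence nullity $n-m+1$, so $A$ has smallest eigenvalue $-2$ with multiplicity $n-m+1$ and its remaining eigenvalues $\lambda_{n-m+2}(A),\dots,\lambda_n(A)$ all exceed $-2$. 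By Lemma~\ref{lem:LGtoL} the nonzero eigenvalues of $MM^\top=A+2I$ and of $M^\top M=L$ agree as multisets; listing each in increasing order matches $\lambda_2(L),\dots,\lambda_m(L)$ with $\lambda_{n-m+2}(A)+2,\dots,\lambda_n(A)+2$, which is exactly $\lambda_i(L)=\lambda_{i+n-m}(A)+2$ for every $i\in\{2,\dots,m\}$.

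Finally I would invoke the explicit part of Lemma~\ref{lem:LGtoL}: for a nonzero eigenvalue $\theta$ of $M^\top M=L$, multiplication by $M=B^\top$ carries $\kernel(L-\theta I)$ isomorphically onto $\kernel((A+2I)-\theta I)$, with inverse $\mathbf{v}\mapsto\theta^{-1}B\mathbf{v}$. Setting $\theta=\lambda_i(L)$, which is nonzero for $i\geqslant2$, and using $\theta-2=\lambda_{i+n-m}(A)$ so that $\kernel((A+2I)-\theta I)=\kernel(A-\lambda_{i+n-m}(A)I)$, yields both $B^\top\kernel(L-\lambda_i(L)I)=\kernel(A-\lambda_{i+n-m}(A)I)$ and, by applying the inverse map, $B\kernel(A-\lambda_{i+n-m}(A)I)=\kernel(L-\lambda_i(L)I)$. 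I expect the only delicate point to be the index bookkeeping in the middle step: one must verify that the multiplicity-preserving spectral correspondence of Lemma~\ref{lem:LGtoL} genuinely aligns the sorted nonzero spectra index-for-index, and this is precisely where connectedness enters, since it forces $L$ to have a single zero eigenvalue and $B$ to have rank $m-1$.
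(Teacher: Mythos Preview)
Your proof is correct and follows essentially the same approach as the paper: both reduce the statement to Lemma~\ref{lem:LGtoL} after identifying $B^\top B=A+2I$ and $BB^\top=L$, use connectedness to ensure $0$ is a simple eigenvalue of $L$, and read off the eigenspace correspondence. The only cosmetic difference is that you apply Lemma~\ref{lem:LGtoL} with $M=B^\top$ (so that $M^\top M=L$), whereas the paper applies it with $M=B$ (so that $M^\top M=A+2I$); since the lemma is symmetric in $M^\top M$ and $MM^\top$, this is immaterial, and your version is in fact slightly more explicit about the rank/index bookkeeping.
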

\begin{proof}
Since $G$ is connected, the multiplicity of $0$ as an eigenvalue of
$L$ is $1$. 
Since $B^\top B=A+2I$ and $BB^\top=L$,
Lemma~\ref{lem:LGtoL} 
implies that $\lambda_{i}(L)=\lambda_{i+n-m}(A+2I)
=\lambda_{i+n-m}(A)+2$ for $1<i\leq m$.
Moreover, 
Lemma~\ref{lem:LGtoL} 
implies 
$B\kernel(B^\top B-\lambda_{i+n-m}(B^\top B)I)
=\kernel(L-\lambda_{i}(L) I)$
and 
$\kernel(B^\top B-\lambda_{i+n-m}(B^\top B)I)
=B^\top\kernel(L-\lambda_{i}(L) I)$.
Since $B^\top B-\lambda_{i+n-m}(B^\top B)I=A-\lambda_{i+n-m}(A)I$, we obtain
the desired result.
\end{proof}

Let $G$ be a graph and let $v$ be a vertex of $G$. 
Recall that $\hat{A}(G,v)$ is the adjacency matrix of $G$, modified 
by putting a $-1$ in the diagonal position corresponding to $v$.

\begin{lemma}[{\cite[Lemma 2.1]{Hoffman:LimitLeastEig1977}}]\label{lem:gtminus22}
Let $T$ be a tree and let $e$ be an end-edge of $T$. Then
$\lambda_1(\hat{A}(\LG(T),e))>-2$.
\end{lemma}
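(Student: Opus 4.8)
The plan is to reformulate the claim as a positive-definiteness statement and exploit the incidence-matrix description developed just before this lemma. Writing $A=A(\LG(T))$, the inequality $\lambda_1(\hat A(\LG(T),e))>-2$ is equivalent to $\hat A(\LG(T),e)+2I\succ0$. Since $T$ is a tree it is bipartite, so (as noted in the text) there is an orientation whose oriented incidence matrix $B$ satisfies $B^\top B=A+2I$. The end-edge $e$ is a vertex of $\LG(T)$, and placing $-1$ in its diagonal position means $\hat A(\LG(T),e)+2I=B^\top B-\mathbf{f}_e\mathbf{f}_e^\top$, where $\mathbf{f}_e$ is the standard basis vector indexed by the edge $e$. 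So it suffices to show that $B^\top B-\mathbf{f}_e\mathbf{f}_e^\top$ is positive definite.

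First I would evaluate the associated quadratic form: for any $\mathbf{x}\in\R^{|E(T)|}$,
\[
\mathbf{x}^\top\bigl(B^\top B-\mathbf{f}_e\mathbf{f}_e^\top\bigr)\mathbf{x}=\norm{B\mathbf{x}}^2-x_e^2 .
\]
Here is where the hypothesis that $e$ is an \emph{end}-edge enters. Let $v$ be the degree-one endpoint of $e$ in $T$. Then the row of $B$ indexed by $v$ has a single nonzero entry, namely $\pm1$ in column $e$, so $(B\mathbf{x})_v=\pm x_e$ and hence $(B\mathbf{x})_v^2=x_e^2$. Consequently $\norm{B\mathbf{x}}^2-x_e^2=\sum_{w\neq v}(B\mathbf{x})_w^2\geqslant0$, and the form is positive semidefinite. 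It remains to rule out equality for $\mathbf{x}\neq\mathbf{0}$.

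The equality $\norm{B\mathbf{x}}^2-x_e^2=0$ forces $(B\mathbf{x})_w=0$ for every vertex $w\neq v$, i.e.\ $B'\mathbf{x}=\mathbf{0}$ where $B'$ is $B$ with the row indexed by $v$ deleted. The point I would argue is that $B'$ has trivial kernel. Since $T$ is a connected tree on $p$ vertices with $p-1$ edges, its oriented incidence matrix $B$ has rank $p-1$, which equals its number of columns, so $B$ has full column rank. Moreover every column of $B$ has exactly one $+1$ and one $-1$, so the rows of $B$ sum to zero; thus the $v$-row is a linear combination of the remaining rows, and deleting it leaves the rank unchanged at $p-1$. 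Hence $B'$ still has full column rank, so $B'\mathbf{x}=\mathbf{0}$ gives $\mathbf{x}=\mathbf{0}$. Therefore the form is strictly positive on nonzero vectors, $B^\top B-\mathbf{f}_e\mathbf{f}_e^\top$ is positive definite, and the lemma follows.

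I expect the only delicate step to be the last one, namely verifying that removing the leaf's row preserves full column rank; this is precisely where both hypotheses are used, since the tree structure supplies rank $p-1=|E(T)|$ and the end-edge condition guarantees that exactly $x_e^2$ (and nothing more) is subtracted. For a general graph with a cycle, or for a non-end edge, this argument would break down, which is consistent with the smallest eigenvalue being able to reach $-2$ in those cases.
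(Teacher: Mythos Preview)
Your argument is correct. The paper does not actually prove this lemma; it simply cites Hoffman's original paper \cite{Hoffman:LimitLeastEig1977}. So there is no in-paper proof to compare against.

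It is worth noting, however, that your reformulation meshes exactly with the setup the paper uses a few lines later in the proof of Theorem~\ref{thm:Hoffman}: there the matrix $C$ is obtained from $B$ by zeroing the $(r,e)$-entry, and one has $C^\top C=\hat A(\LG(T),e)+2I$. Your identity $\hat A(\LG(T),e)+2I=B^\top B-\mathbf{f}_e\mathbf{f}_e^\top$ is the same thing (since the leaf row of $B$ has its only nonzero in column $e$, one checks $C^\top C=B^\top B-\mathbf{f}_e\mathbf{f}_e^\top$). Your rank argument---that deleting the leaf row from $B$ still leaves a matrix of full column rank $p-1$, because the rows of an oriented incidence matrix sum to zero---is precisely what is needed to upgrade the obvious positive-semidefiniteness of $C^\top C$ to positive definiteness. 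So your proof not only fills in what the paper omits, it does so in a way that is entirely compatible with the surrounding machinery.

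One minor remark: in the paper's notation $\mathbf{f}_i$ denotes a basis vector of the vertex space $\R^{n+1}$, while your $\mathbf{f}_e$ lives in the edge space; if you were inserting this into the paper you would want to write $\mathbf{e}_1\mathbf{e}_1^\top$ instead, matching the convention set up in the proof of Theorem~\ref{thm:Hoffman}.
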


We are now ready to prove Theorem~\ref{thm:Hoffman}.

\begin{proof}[Proof of Theorem~\ref{thm:Hoffman}]
Let $T$ be a tree on $n+1$ vertices and $n$ edges, 
and let $A = (a_{i,j})$ denote the adjacency matrix of the line
graph $\LG(T)$ of $T$.
Since $T$ is bipartite, 
one can orient its edges so that its oriented incidence matrix $B = (b_{i,j})$ satisfies
\[
	B^\top B = A + 2I.
\]
We also have $BB^\top = L$, the Laplacian matrix of $T$.


Let $r$ and $s$ be the vertices of the end-edge $e$, and
assume $r$ has valency $1$ in $T$.
We may choose $B$ so that the first row and column correspond to $r$ and $e$
respectively, and the second row corresponds to $s$.

Let the column vectors $\mathbf{e}_i$ (resp. $\mathbf{f}_i$) be the canonical bases of 
the Euclidean spaces $\R^{n}$ (resp. $\R^{n+1}$).
Without loss of generality, we assume $B \mathbf{e}_1 = 
\mathbf{f}_1 - \mathbf{f}_2$ and 
$B^\top \mathbf{f}_1 = \mathbf{e}_1$.


We obtain the matrix $C$ from $B$ by setting $b_{1,1} = 0$.
Define matrices $A^\prime$ and $L^\prime$ by
\[
	C^\top C = A^\prime + 2I, \text{ and } CC^\top = L^\prime.
\]
Then $A^\prime$ can be obtained from $A$ by setting $a_{1,1}=-1$, 
that is, $A^\prime=\hat{A}(\LG(T),e)$.
The matrix $L^\prime$ can be obtained from $L$ by setting all entries of the first
row and column to zero.

By Lemma~\ref{lem:gtminus22}, $C^\top C$ is positive definite.
It then follows from Lemma~\ref{lem:LGtoL} that $L'$ is 
a positive semidefinite
$(n+1)\times (n+1)$ matrix with rank $n$.
Let $X$ be the principal submatrix of $L'$ obtained by removing
the first row and column of $L'$.
Since the matrix $L'$ has only zeros in its first row and column,
the matrix $X$ is positive definite.


Moreover, $X$ is an M-matrix, that is, in addition to being
positive definite, its off-diagonal entries are non-positive.
By \cite[Theorem 2.5.3]{HornJohnson},
$X^{-1}$ is a non-negative matrix. 
By the Perron-Frobenius theorem (see, for example, \cite{Godsil:AlgGraph}), 
any eigenvector corresponding to the smallest eigenvalue of $X$ has no zero entry.

%

By Lemma~\ref{lem:LGtoL}, 
$\lambda_1(A'+2I)=\lambda_2(L')=\lambda_1(X)$,
and $\lambda_1(A+2I)=\lambda_2(L)$. 
Thus, to prove 
Theorem~\ref{thm:Hoffman},
it suffices to show that $\lambda_1(X) < \lambda_2(L)$.
	By Lemma~\ref{lem:v1zero}, we can assume 
$\kernel (A-\lambda_1(A)I) \subset \mathbf{f}_1^\perp$.
	
	Let $\mathbf{w}$ be an eigenvector of $L$ belonging to the eigenvalue
	$\lambda_2(L)$.
Then
\begin{align*}
B^\top\mathbf{w}
&\in B^\top\kernel(L-\lambda_2(L)I)
\\&=\kernel(A-\lambda_1(A)I)
&&\text{(by Lemma~\ref{lem:LT})}
\\&\subset\mathbf{e}_1^\perp.
\end{align*}

Thus
$\mathbf{w}\in (B\mathbf{e}_1)^\perp=(\mathbf{f}_1-\mathbf{f}_2)^\perp$.

Therefore $w_1 = w_2$.

	
		On the other hand, again by Lemma~\ref{lem:LT}, the eigenvector 
	$\mathbf{w}$ can be written as $B \mathbf{v}$ where $\mathbf{v}$
	is in $\kernel (A-\lambda_1(A)I) \subset \mathbf{e}_1^\perp$.
	Then it follows that $w_1=\mathbf{f}_1^\top B\mathbf{v}=\mathbf{e}_1^\top\mathbf{v}
	=0$.
	
	Hence $w_1 = w_2 = 0$.

Since the first row of $L$ can be written as $\mathbf{f}_1^\top - \mathbf{f}_2^\top$, we have $$
	L \mathbf{w} = \begin{pmatrix}
		0 \\
		X \mathbf{y}
	\end{pmatrix},$$
	where $\mathbf{w}^\top = (0, \mathbf{y}^\top)$. 

	Hence, $\mathbf{w}$ restricts to an eigenvector $\mathbf{y}$ of $X$.
	But the first entry of $\mathbf{y}$ is zero.
	Since $X$ is an M-matrix, 
$\lambda_2(L)$ is not the smallest eigenvalue of $X$. This implies
$\lambda_1(X) < \lambda_2(L)$.
\end{proof}

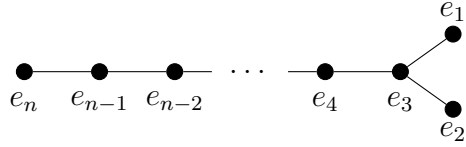
\begin{figure}[htbp]
	\centering
	\begin{tikzpicture}[yshift=4cm, xshift=0.8cm]
		\foreach \type/\pos/\name in {{vertex/(0,0.5)/bgn}, {vertex/(1,0.5)/b1}, {vertex/(2,0.5)/e1}, {empty/(2.6,0.5)/b11}, {empty/(3.4,0.5)/c11}, {vertex/(4,0.5)/c1}, {vertex/(5,0.5)/d1}, {vertex/(5.7,1)/f1}, {vertex/(5.7,0)/f2}}
			\node[\type] (\name) at \pos {};
		\foreach \pos/\name in {{(0,0.1)/e_n}, {(1,0.1)/e_{n-1}}, {(2,0.1)/e_{n-2}}, {(4,0.1)/e_4}, {(5,0.1)/e_3}, {(5.7,-0.3)/e_2}, {(5.7,1.3)/e_1}}
			\node at \pos {$\name$};
		\foreach \pos/\name in {{(3,0.5)/\dots}}
			\node at \pos {$\name$};
		\foreach \edgetype/\source/ \dest in {pedge/b1/e1, pedge/e1/b11, pedge/c11/c1, pedge/c1/d1, pedge/d1/f1, pedge/d1/f2}
			\path[\edgetype] (\source) -- (\dest);
		\foreach \edgetype/\source/\dest in {pedge/b1/bgn}
			\path[\edgetype] (\source) -- (\dest);
	\end{tikzpicture}
	\caption{The graph $\mathcal X_n^{(1)} (n \geqslant 3)$}
	\label{fig:x1}
\end{figure}

\begin{figure}[htbp]
	\centering
	\begin{tikzpicture}
		\begin{scope}[xshift=6.5cm, yshift=0.5cm]
			\newdimen\rad
			\rad=1cm
			\newdimen\radi
			\radi=1.04cm
			\newdimen\radii
			\radii=1.44cm
			\draw (365.5:\radi) node[empty] {$\vdots$};
			\foreach \y/\lab in {70/e_{2k-1},170/e_3,210/e_2,250/e_1,350/e_{2k+1},390/e_{2k}}
			{
				\def\x{\y - 120}
				\draw (\x:\radii) node[empty] {$\lab$};
		    }
			\foreach \y in {90,150,210,270,330,390}
			{
				\def\x{\y - 120}
				\draw (\x:\rad) node[vertex] {};
		    }
				\foreach \y in {210,270,330,390}
				{
					\def\x{\y - 120}
					\draw[pedge] (\x:\rad) arc (\x:\x+60:\rad);
			    }
			\draw[pedge] (30:\rad) arc (30:90:\rad);
			\draw (210:\rad) node[vertex] {};
			\draw[pedge] (330:\rad) arc (330:340:\rad);
			\draw[pedge] (380:\rad) arc (380:390:\rad);
			\node (x) at (140:\rad) {};
			\node (y) at (220:\rad) {};
		\end{scope}
		\begin{scope}
			\foreach \type/\pos/\name in {{vertex/(0,0.5)/bgn}, {vertex/(1,0.5)/b1}, {vertex/(2,0.5)/e1}, {empty/(2.6,0.5)/b11}, {empty/(3.4,0.5)/c11}, {vertex/(4,0.5)/c1}, {vertex/(5,0.5)/d1}}
				\node[\type] (\name) at \pos {};
			\foreach \pos/\name in {{(0,0.1)/f_l}, {(1,0.1)/f_{l-1}}, {(2,0.1)/f_{l-2}}, {(4,0.1)/f_2}, {(5,0.1)/f_1}}
				\node at \pos {$\name$};
			\foreach \pos/\name in {{(3,0.5)/\dots}}
				\node at \pos {$\name$};
			\foreach \edgetype/\source/ \dest in {pedge/b1/e1, pedge/e1/b11, pedge/c11/c1, pedge/c1/d1, pedge/d1/x, pedge/d1/y}
				\path[\edgetype] (\source) -- (\dest);
			\foreach \edgetype/\source/\dest in {pedge/b1/bgn}
				\path[\edgetype] (\source) -- (\dest);
		\end{scope}
	\end{tikzpicture}
	\caption{The graph $\mathcal X_{k,l}^{(2)} (k,l \geqslant 1)$}
	\label{fig:x2}
\end{figure}

\begin{figure}[htbp]
	\centering
	\begin{tikzpicture}[yshift=-5cm]
		\begin{scope}[xshift=6.5cm, yshift=0.5cm]
			\newdimen\rad
			\rad=1cm
			\newdimen\radi
			\radi=1.04cm
			\newdimen\radii
			\radii=1.44cm
			\draw (365.5:\radi) node[empty] {$\vdots$};
			\foreach \y/\lab in {70/e_{2k},170/e_3,210/e_2,250/e_1,350/e_{2k+2},390/e_{2k+1}}
			{
				\def\x{\y - 120}
				\draw (\x:\radii) node[empty] {$\lab$};
		    }
			\foreach \y in {90,150,210,270,330,390}
			{
				\def\x{\y - 120}
				\draw (\x:\rad) node[vertex] {};
		    }
				\foreach \y in {150,210,330,390}
				{
					\def\x{\y - 120}
					\draw[pedge] (\x:\rad) arc (\x:\x+60:\rad);
			    }
			\draw[nedge] (150:\rad) arc (150:210:\rad);
			\draw (210:\rad) node[vertex] {};
			\draw[pedge] (330:\rad) arc (330:340:\rad);
			\draw[pedge] (380:\rad) arc (380:390:\rad);
			\node (x) at (140:\rad) {};
			\node (y) at (220:\rad) {};
		\end{scope}
		\begin{scope}
			\foreach \type/\pos/\name in {{vertex/(0,0.5)/bgn}, {vertex/(1,0.5)/b1}, {vertex/(2,0.5)/e1}, {empty/(2.6,0.5)/b11}, {empty/(3.4,0.5)/c11}, {vertex/(4,0.5)/c1}, {vertex/(5,0.5)/d1}}
				\node[\type] (\name) at \pos {};
			\foreach \pos/\name in {{(0,0.1)/f_l}, {(1,0.1)/f_{l-1}}, {(2,0.1)/f_{l-2}}, {(4,0.1)/f_2}, {(5,0.1)/f_1}}
				\node at \pos {$\name$};
			\foreach \pos/\name in {{(3,0.5)/\dots}}
				\node at \pos {$\name$};
			\foreach \edgetype/\source/ \dest in {pedge/b1/e1, pedge/e1/b11, pedge/c11/c1, pedge/c1/d1, nedge/d1/x, pedge/d1/y}
				\path[\edgetype] (\source) -- (\dest);
			\foreach \edgetype/\source/\dest in {pedge/b1/bgn}
				\path[\edgetype] (\source) -- (\dest);
		\end{scope}
	\end{tikzpicture}
	\caption{The graph $\mathcal X_{k,l}^{(3)} (k,l \geqslant 1)$}
	\label{fig:x3}
\end{figure}
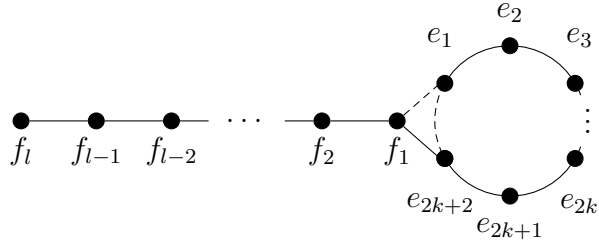

\begin{lemma}\label{lem:-2eig}
The matrices 
$\hat{A}(\mathcal{X}_n^{(1)},e_n)$,
$\hat{A}(\mathcal{X}_{k,l}^{(2)},f_l)$, and
$\hat{A}(\mathcal{X}_{k,l}^{(3)},f_l)$ (see Figures~\ref{fig:x1}, \ref{fig:x2}, and \ref{fig:x3}) have smallest eigenvalue $-2$.
\end{lemma}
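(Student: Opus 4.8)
The plan is to realise each modified matrix as a Gram matrix $C^\top C$ of an explicit integer matrix $C$, so that positive semidefiniteness (hence $\lambda_1\geqslant-2$) comes for free, and then to show that $C$ is singular, which forces $\lambda_1=-2$.

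First I would produce an integral representation of each unmodified graph. By Theorem~\ref{thm:3}, each of $\mathcal{X}_n^{(1)}$, $\mathcal{X}_{k,l}^{(2)}$, $\mathcal{X}_{k,l}^{(3)}$ has smallest eigenvalue greater than $-2$ and an integral representation whose representation graph $H$ realises the case $n=m$ of Lemma~\ref{lem:1}: for $\mathcal{X}_n^{(1)}$ it is a tree with a double edge (case (iii) of Theorem~\ref{thm:3}), for $\mathcal{X}_{k,l}^{(2)}$ it is unicyclic with an odd cycle (case (i)), and for $\mathcal{X}_{k,l}^{(3)}$ it is unicyclic with an even cycle (case (ii)). In every case $H$ consists of its cycle (or double edge) together with a single pendant path, and the vertex to be modified---$e_n$ respectively $f_l$---is the edge of $H$ sitting at the free end of that path. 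Writing $M$ for the associated (sign-adjusted) incidence matrix, we have $A+2I=M^\top M$; since $H$ has as many vertices as edges, $M$ is square, and $\det(M^\top M)=4$ by the determinant computation in the proof of Theorem~\ref{thm:3}, so $M$ is nonsingular.

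Next I would carry out the one-entry modification used in the proof of Theorem~\ref{thm:Hoffman}. Let $r$ be the valency-one vertex of $H$ at the tip of the pendant path, and let $v$ denote the modified vertex, so that $r$ is incident to the edge $v$ and to no other edge. Let $C$ be obtained from $M$ by replacing the entry in row $r$ of the column $\mathbf{m}$ of $M$ indexed by $v$ by $0$. Because $r$ meets no other edge, zeroing this entry changes no inner product except $(\mathbf{m},\mathbf{m})$, which drops from $2$ to $1$; equivalently $C^\top C$ and $M^\top M$ agree except in the $(v,v)$ entry, where $C^\top C$ is smaller by $1$. Hence $C^\top C = M^\top M - E = \hat{A}(\,\cdot\,,v)+2I$, where $E$ has a single nonzero entry, a $1$ in the diagonal position of $v$.

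Finally, $C^\top C$ is a Gram matrix, so it is positive semidefinite and $\lambda_1(\hat{A})\geqslant-2$; on the other hand row $r$ of $C$ is now identically zero, so $\det C=0$ and $C^\top C$ is singular, which gives $\lambda_1(\hat{A})=-2$. The step I expect to be the main obstacle is the identification, for each family, of the representation graph $H$ together with its cycle, and the verification that the modified vertex genuinely lies at the end of a pendant path. This is exactly the feature that separates these graphs from the trees of Lemma~\ref{lem:gtminus22}: for a tree the same construction produces the \emph{reduced} incidence matrix of a tree, which is \emph{nonsingular}, yielding strict inequality, whereas here the extra cycle makes $M$ square so that deleting the private row of $v$ kills the determinant.
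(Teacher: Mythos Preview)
Your argument is correct and takes a genuinely different route from the paper's. The paper proceeds in two steps: it first exhibits, for each family, an explicit eigenvector for the eigenvalue $-2$ (alternating $\pm1$'s on the cycle and $\pm2$'s on the pendant path), and then deletes the row and column carrying the diagonal $-1$; the remaining principal submatrix is the adjacency matrix of an edge-signed graph with $\lambda_1>-2$ (by a spectral-radius bound for $\mathcal{X}_n^{(1)}$, and by parts (i), (ii) of Theorem~\ref{thm:3} for the other two families), so interlacing forces $-2$ to be the smallest eigenvalue.

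Your Gram-matrix factorisation handles both directions at once and is more structural: it explains the appearance of $-2$ as the degeneration of the square integral representation when the pendant vertex's unique coordinate is erased, and it makes the contrast with Lemma~\ref{lem:gtminus22} transparent---for a tree the matrix $M$ has one more row than columns, so zeroing a single entry cannot kill the column rank. Both proofs invoke Theorem~\ref{thm:3}, but in different directions: you use the converse to manufacture the square $M$, whereas the paper uses the forward direction to bound $\lambda_1$ of the deleted submatrix. One minor remark: the value $\det(M^\top M)=4$ plays no role in your argument; all you need is that row $r$ of $C$ vanishes.
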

\begin{proof}
	We give the eigenvectors corresponding to the eigenvalue $-2$ of 
each matrix in the statement of the lemma.
	\begin{itemize}
		\item $\hat{A}(\mathcal{X}_n^{(1)},e_n)$: 
		set $e_1, e_2 = -1$, and $e_j = (-1)^{j+1}\cdot 2$ for $j \in \{3,\dots, n\}$.
		\item $\hat{A}(\mathcal{X}_{k,l}^{(2)},f_l)$: 
		set $e_j = (-1)^{j+1}$ for $j \in \{1,\dots, 2k+1\}$ and set $f_j = (-1)^j\cdot 2$ for $j \in \{1,\dots, l\}$.
		\item $\hat{A}(\mathcal{X}_{k,l}^{(3)},f_l)$: 
		set $e_j = (-1)^{j}$ for $j \in \{1,\dots, 2k+2\}$ and set $f_j = (-1)^j\cdot 2$ for $j \in \{1,\dots, l\}$.
	\end{itemize}
Deleting the row and column containing the $-1$ on the diagonal,
we obtain the adjacency matrix of a graph with smallest eigenvalue
greater than $-2$. This is immediate for $\mathcal X_n^{(1)}$ since
the obtained graph has spectral radius less than $2$. As for
$\mathcal{X}_{k,l}^{(2)}$ and $\mathcal{X}_{k,l}^{(3)}$, the result
follows from (i) and (ii), respectively, of Theorem~\ref{thm:3}.
By interlacing, 
$\hat{A}(\mathcal{X}_n^{(1)},e_n)$,
$\hat{A}(\mathcal{X}_{k,l}^{(2)},f_l)$, and
$\hat{A}(\mathcal{X}_{k,l}^{(3)},f_l)$ 
have at most one eigenvalue less than or equal to $-2$.
This implies that $-2$ is indeed the smallest eigenvalue.
\end{proof}

\begin{theorem}\label{thm:E8}
	Let $G$ be a connected edge-signed graph and let $v\in V(G)$ such that $\lambda_{1}(\hat{A}(G,v)) \geqslant -2$.
	Then $G$ is integrally represented.
\end{theorem}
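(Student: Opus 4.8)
The plan is to argue by contradiction, turning the spectral hypothesis into the existence of a short vector in the dual of the root lattice generated by a representation of $G$, and then playing this off against the arithmetic of the exceptional root lattices.

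First I would note that $\hat{A}(G,v)+2I$ is obtained from $A(G)+2I$ by subtracting the rank-one matrix $\mathbf{c}\mathbf{c}^\top$, where $\mathbf{c}$ is the standard unit (column) vector supported at $v$. Hence $A(G)+2I\succeq\hat{A}(G,v)+2I\succeq0$, so in particular $\lambda_1(G)\geqslant-2$, and by Theorem~\ref{thm:CGSS} the connected graph $G$ is represented by a subset of $D_n$ or of $E_8$. Let $\mathbf{x}_1,\dots,\mathbf{x}_m$ be the representing vectors, so that their Gram matrix is $A(G)+2I$. Now suppose, for contradiction, that $G$ is \emph{not} integrally represented; then by Corollary~\ref{cor:integralReps} the graph $G$ is exceptional, represented by a subset of $E_8$ but by no $D_n$.

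Next I would extract an auxiliary vector. Since $(A(G)+2I)-\mathbf{c}\mathbf{c}^\top\succeq0$, a (generalised) Schur complement argument shows that the bordered matrix
\[
\begin{pmatrix} A(G)+2I & \mathbf{c}\\ \mathbf{c}^\top & 1\end{pmatrix}
\]
is positive semidefinite, hence a Gram matrix. This produces a vector $\mathbf{z}$ with $(\mathbf{z},\mathbf{z})=1$, $(\mathbf{z},\mathbf{x}_v)=1$, and $(\mathbf{z},\mathbf{x}_i)=0$ for $i\neq v$; replacing $\mathbf{z}$ by its orthogonal projection onto the span of $\mathbf{x}_1,\dots,\mathbf{x}_m$, I may assume $\mathbf{z}$ lies in that span with $(\mathbf{z},\mathbf{z})\leqslant1$ and the same inner products. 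The point is that the modification of the diagonal at $v$ by $-1$ is exactly what licenses appending such a norm-one vector. I would then let $\Lambda$ be the lattice generated by $\mathbf{x}_1,\dots,\mathbf{x}_m$: being generated by vectors of norm $2$ it is a root lattice, and since $G$ is connected the $\mathbf{x}_i$ form an indecomposable set (adjacent vertices give non-orthogonal vectors), so $\Lambda$ is irreducible. As $G$ has no integral representation, $\Lambda$ embeds into no $\Z^n$, which rules out the types $A_r$ and $D_r$; hence $\Lambda$ is of type $E_6$, $E_7$, or $E_8$. On the other hand $(\mathbf{z},\mathbf{x}_i)\in\Z$ for all $i$ gives $\mathbf{z}\in\Lambda^*$, while $(\mathbf{z},\mathbf{x}_v)=1$ forces $\mathbf{z}\neq0$ with $(\mathbf{z},\mathbf{z})\leqslant1$. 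This contradicts the fact that $E_6^*$, $E_7^*$, $E_8^*$ have minimal norm $4/3$, $3/2$, $2$ respectively, all strictly greater than $1$, and the contradiction shows that $G$ must be integrally represented.

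The main obstacle is the structural step identifying $\Lambda$ as an irreducible root lattice of exceptional type: this rests on the classification of irreducible root lattices (so that ``not embeddable in $\Z^n$'' is precisely ``of type $E_6,E_7,E_8$'') together with the input that these three lattices admit no nonzero dual vector of norm at most $1$. The latter is the arithmetic heart of the argument, and is where I would be most careful; I would verify it through the minimal norms of the nontrivial cosets of the discriminant groups $E_6^*/E_6$, $E_7^*/E_7$, and $E_8^*/E_8$.
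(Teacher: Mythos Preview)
Your proof is correct, and it takes a genuinely different route from the paper's.

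The paper argues directly: it factors $\hat{A}(G,v)+2I=U^\top U$, lets $\Lambda$ be the $\Z$-span of the columns of $U$, and observes that the column at $v$ is a vector of norm~$1$ in $\Lambda$. It then shows that the unit vectors in $\Lambda$ span an orthogonal summand $\Lambda'\cong\Z^m$ and that $\Lambda=\Lambda'\perp X$; connectedness of $G$ forces $X=0$, so $\Lambda\cong\Z^m$ and an integral representation of $G$ is read off by appending one coordinate. No appeal to Theorem~\ref{thm:CGSS}, to the classification of root lattices, or to properties of $E_r^*$ is needed.

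Your argument instead proceeds by contradiction: you first invoke Theorem~\ref{thm:CGSS} to realise $G$ inside $E_8$, then convert the hypothesis $\hat{A}(G,v)+2I\succeq0$ (via a Schur complement and projection) into a nonzero vector $\mathbf{z}\in\Lambda^*$ of norm at most~$1$, where $\Lambda$ is the irreducible root lattice generated by the representing vectors. Ruling out types $A_r,D_r$ by the non-existence of an integral representation pins $\Lambda$ down to $E_6,E_7,E_8$, and the known minimal norms $4/3,3/2,2$ of their duals give the contradiction. This is more conceptual and explains \emph{why} the exceptional lattices are excluded (their duals have no short vectors), but it imports heavier machinery: the Cameron--Goethals--Seidel--Shult theorem, the ADE classification of irreducible root lattices, and the arithmetic of the $E_r^*$. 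The paper's approach is more self-contained and constructive; yours has the merit of isolating the obstruction as a single lattice-theoretic invariant.
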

\begin{proof}
	Suppose that $\hat{A}(G,v) + 2I$ is positive semidefinite.
	Then we can write $\hat{A}(G,v) + 2I = U^\top U$ for some matrix $U$.
	Label the columns of $U$ as $\mathbf{u}_1, \dots, \mathbf{u}_n$ where $\norm{\mathbf{u}_1} = 1$ and $\norm{\mathbf{u}_i}^2 = 2$ for $i \in \{2,\dots, n\}$.
	Let $\Lambda = \bigoplus_{i=1}^{n}\Z \mathbf{u}_i$ and let $B = \{ \mathbf{v} \in \Lambda \mid \norm{\mathbf{v}} =1 \}$.
	Clearly $B = \{\pm \mathbf{v}_1, \dots, \pm \mathbf{v}_m\}$ for some $m$ with $(\mathbf{v}_i,\mathbf{v}_j) = \delta_{ij}$.
	Define $\Lambda^\prime$ as the $\Z$-span of the vectors of $B$ and set $X = \Lambda \cap (\Lambda^\prime)^\perp$.
	It is easily checked that a vector $\mathbf{v} \in \Lambda$ with $\norm{\mathbf{v}}^2 = 2$ and $\mathbf{v} \not \in \Lambda^\prime$ is orthogonal to $\Lambda^\prime$.
	Hence we can write $\Lambda$ as the orthogonal sum $\Lambda = \Lambda^\prime \perp X$ and so $\mathbf{v} \in X$.
	Unless either $\Lambda^\prime = 0$ or $X = 0$, this orthogonal decomposition of $\Lambda$ violates our assumption that $G$ is connected.
	Since $\mathbf{u}_1 \in \Lambda^\prime$,
	we must have $X = 0$.
	Therefore $\Lambda = \Lambda^\prime \cong \Z^m$.
	
	Finally, the vectors
	\[
		\begin{pmatrix}
			1 \\ \mathbf{u}_1, 
		\end{pmatrix},
		\begin{pmatrix}
			0 \\ \mathbf{u}_2, 
		\end{pmatrix},
		\dots,
		\begin{pmatrix}
			0 \\ \mathbf{u}_n 
		\end{pmatrix}
	\]
	all have norm $\sqrt{2}$ and their Gram matrix gives $A + 2I$.
	Clearly these vectors are contained in a $\Z$-lattice and this lattice represents $G$.
\end{proof}

\begin{remark}
The proof of Theorem~\ref{thm:E8} is essentially the same as
that of \cite[Theorem 3.7]{-3}. 
\end{remark}


\begin{theorem}\label{thm:11}
Let $G$ be an edge-signed graph with $\lambda_{1}(G) > -2$, and let $v\in V(G)$. Then
$\lambda_{1}(G)>\lambda_{1}(\hat{A}(G,v))$.
Furthermore, 
$\lambda_{1}(\hat{A}(G,v))>-2$
if and only if the underlying graph of 
$G$ is the line graph of a tree $T$ and $v$ corresponds to an end-edge
of $T$.
Otherwise, 
$\lambda_{1}(\hat{A}(G,v)) \leqslant -2$.
\end{theorem}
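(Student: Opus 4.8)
The plan is to prove the characterization of when $\lambda_1(\hat{A}(G,v)) > -2$ first, and then to read off the strict inequality almost for free. The constant companion of the argument is the identity $\hat{A}(G,v) = A(G) - E_{vv}$, where $E_{vv}$ is the positive semidefinite matrix with a single nonzero (diagonal) entry at $v$. This gives $\lambda_1(\hat{A}(G,v)) \le \lambda_1(G)$ by Courant--Fischer, and it shows that the whole spectrum of $\hat{A}(G,v)$ depends only on the switching class of the pair $(G,v)$: switching conjugates $A(G)$ by a diagonal $\pm 1$ matrix $D$, and since $D E_{vv} D = E_{vv}$ we get $\hat{A}(G',v) = D\,\hat{A}(G,v)\,D$. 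Because of this, both sides of every inequality I need are switching invariants, and I may freely replace $(G,v)$ by any switching-equivalent pair.

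For the forward direction of the characterization (assume $\lambda_1(\hat{A}(G,v)) > -2$ and deduce the structural condition), first note that $\lambda_1(\hat{A}(G,v)) \ge -2$ forces $G$ to be integrally represented by Theorem~\ref{thm:E8}, so $G$ falls under one of the cases (i)--(iii) of Theorem~\ref{thm:3}. The heart of the matter is a forbidden-configuration step: I claim that unless $U(G) = \LG(T)$ for a tree $T$ with $v$ an end-edge, the pair $(G,v)$ contains, up to switching and as an induced sub-configuration with $v$ in the role of the marked vertex, a copy of $\mathcal{X}_n^{(1)}$, $\mathcal{X}_{k,l}^{(2)}$, or $\mathcal{X}_{k,l}^{(3)}$. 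Concretely: if $v$ is not simplicial (it has two non-adjacent neighbours) one extracts $\mathcal{X}_3^{(1)}$, the path $P_3$ centred at $v$; if the representation graph $H$ contains a genuine cycle one takes that cycle with a geodesic to $v$, giving $\mathcal{X}_{k,l}^{(2)}$ in the odd case and $\mathcal{X}_{k,l}^{(3)}$ in the even case, whose single negative edge is exactly what Lemma~\ref{lem:signed-cycle} forces from $\lambda_1(G) > -2$; and if $H$ has a double edge one uses the claw formed by an edge adjacent to both ends of the doubled pair together with the path to $v$ to build $\mathcal{X}_n^{(1)}$. In each instance Lemma~\ref{lem:-2eig} gives modified smallest eigenvalue $-2$, so Cauchy interlacing yields $\lambda_1(\hat{A}(G,v)) \le -2$, a contradiction; hence $U(G) = \LG(T)$ and $v$ is an end-edge.

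For the reverse direction I first upgrade the hypotheses $U(G) = \LG(T)$ and $\lambda_1(G) > -2$ to $G$ switching equivalent to the all-positive $\LG(T)$: a tree's line graph is a block graph whose blocks are cliques, so every induced cycle is a triangle, and Lemma~\ref{lem:signed-cycle} together with $\lambda_1(G) > -2$ forces each such triangle to be balanced, whence $G$ itself is balanced. By switching invariance $\hat{A}(G,v)$ has the same spectrum as $\hat{A}(\LG(T),e)$, where $e$ is the end-edge corresponding to $v$, and Lemma~\ref{lem:gtminus22} gives $\lambda_1(\hat{A}(G,v)) > -2$. The clause ``otherwise $\le -2$'' is the contrapositive of the forward direction. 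The strict inequality then follows at once: if $(G,v)$ does not meet the structural condition then $\lambda_1(\hat{A}(G,v)) \le -2 < \lambda_1(G)$ by the standing hypothesis, while if it does then $G \sim \LG(T)$ and Theorem~\ref{thm:Hoffman} gives $\lambda_1(\hat{A}(\LG(T),e)) < \lambda_1(\LG(T))$, which transfers to $\lambda_1(\hat{A}(G,v)) < \lambda_1(G)$ by switching invariance.

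I expect the main obstacle to be the forbidden-configuration step of the forward direction. Across the three cases of Theorem~\ref{thm:3}, and for an arbitrary marked vertex $v$, one must always produce a correctly signed induced copy of one of the $\mathcal{X}$-graphs with the mark at $v$, and verify it is switching equivalent to the standard form so that Lemma~\ref{lem:-2eig} applies. The delicate points are the Whitney-exceptional coincidences (for instance $\LG(C_3) = \LG(K_{1,3})$, which keeps short odd cycles inside the ``good'' class) and the simplicial choices of $v$ lying on a short cycle or next to a double edge, where the short configuration $\mathcal{X}_3^{(1)}$ is unavailable and one must instead route a geodesic to a cycle or to a double-edge claw to reach a larger $\mathcal{X}_n^{(1)}$, $\mathcal{X}_{k,l}^{(2)}$, or $\mathcal{X}_{k,l}^{(3)}$; that is where the bookkeeping is heaviest.
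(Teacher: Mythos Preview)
Your proposal is correct and follows essentially the same route as the paper: reduce to the integrally represented case via Theorem~\ref{thm:E8}, invoke the trichotomy of Theorem~\ref{thm:3}, and then, case by case, exhibit an induced copy of one of $\hat{A}(\mathcal{X}_n^{(1)},e_n)$, $\hat{A}(\mathcal{X}_{k,l}^{(2)},f_l)$, $\hat{A}(\mathcal{X}_{k,l}^{(3)},f_l)$ with the marked vertex at $v$ (Lemma~\ref{lem:-2eig} plus interlacing), reserving Theorem~\ref{thm:Hoffman} and Lemma~\ref{lem:gtminus22} for the good case. You are also right that the heaviest bookkeeping is in the forbidden-configuration step, and your awareness of the $\LG(C_3)=\LG(K_{1,3})$ coincidence and of the need to route a geodesic from $v$ to the cycle or the double edge matches exactly what the paper's terse case analysis is doing.

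Two points where you are slightly more explicit than the paper, and usefully so: first, you spell out the switching invariance of the spectrum of $\hat{A}(G,v)$ via $D E_{vv} D=E_{vv}$, which the paper uses without comment; second, for the reverse direction you argue directly that $U(G)=\LG(T)$ together with $\lambda_1(G)>-2$ forces $G$ to be balanced (every induced cycle is a triangle in a clique block, and Lemma~\ref{lem:signed-cycle} makes each triangle balanced), whereas the paper simply reads this off from Theorem~\ref{thm:3}(i). Both arguments are fine; yours is a little more self-contained for the converse as stated.
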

\begin{proof}
	By Theorem~\ref{thm:CGSS}, $G$ is represented by $D_n$ or $E_8$.
	We may assume that $G$ is represented by $D_n$, otherwise by Corollary~\ref{cor:integralReps} and 		Theorem~\ref{thm:E8} we would have $\lambda_{1}(\hat{A}(G,v)) < -2$ in which case the theorem holds.
	Therefore the structure of $G$ can be described by Theorem~\ref{thm:3}.

	First suppose $G$ is of type (i) from Theorem~\ref{thm:3}.
	Suppose $G$ is the line graph of a tree $T$.
	If $v$ is not an end-edge of $T$ then $\hat{A}(G,v)$ contains $\hat{A}(\mathcal X_3^{(1)},e_3)$ as a principal submatrix, hence $\lambda_{1}(\hat{A}(G,v))\leqslant-2$.
If $v$ is an end-edge of $T$, then
$\lambda_{1}(G)>\lambda_{1}(\hat{A}(G,v))$ by Theorem~\ref{thm:Hoffman},
and $\lambda_{1}(\hat{A}(G,v))>-2$ by Lemma~\ref{lem:v1zero}.
	Next suppose $G$ is of type (i) but not the line graph of a tree $T$.
	That is, $G$ is the line graph of a unicyclic graph with an odd cycle (and $G$ is not equal to $C_3$).
	Then $\hat{A}(G,v)$ contains (as a principal submatrix) either $\hat{A}(\mathcal X_3^{(1)},e_3)$ or $\hat{A}(\mathcal X_{k,l}^{(2)},f_l)$ for some $k$ and $l$.
	
	Suppose $G$ is of type (ii).
	Then $\hat{A}(G,v)$ contains (as a principal submatrix) either $\hat{A}(\mathcal X_3^{(1)},e_3)$ or $\hat{A}(\mathcal X_{k,l}^{(3)},f_l)$ for some $k$ and $l$.
	
	Suppose $G$ is of type (iii).
	Then $G$ contains $\mathcal X_n^{(1)}$ for some $n$.
Therefore, by Lemma~\ref{lem:gtminus22}, we have
$\lambda_{1}(\hat{A}(G,v)) \leqslant -2$ for these cases.
\end{proof}


\begin{remark}
A special case of Theorem~\ref{thm:11} for unsigned graphs is given
in \cite[Theorem 5.2]{gt-3}.
\end{remark}

\section{Exceptional graphs}\label{sec:ex.grph}

In this section we enumerate the exceptional edge-signed graphs 
with smallest eigenvalue greater than $-2$, 
i.e., those that are not integrally represented.
In the tables in the appendix we list (up to switching) every such edge-signed graph.
We generalise the following result about graphs with smallest eigenvalue greater than $-2$.

\begin{theorem}[\cite{BusseNeum,DoobCvetkovic:LAA79}]\label{thm:CvetDoob}
	Let $G$ be an exceptional graph having smallest eigenvalue greater than $-2$.
	Then $G$ is one of
	\begin{enumerate}
		\item $20$ graphs on $6$ vertices;
		\item $110$ graphs on $7$ vertices;
		\item $443$ graphs on $8$ vertices.
	\end{enumerate}
\end{theorem}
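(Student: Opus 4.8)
The plan is to turn the enumeration into a finite orbit computation inside the root system $E_8$, where the only substantial input is the classification of root lattices. First I would record what the hypotheses force: by the definition of exceptional, Corollary~\ref{cor:integralReps}, and Theorem~\ref{thm:CGSS}, a connected exceptional graph $G$ is represented by a subset of $E_8$ but by no $D_n$. Writing $\phi$ for such a representation, each $\phi(v)$ is a norm-$2$ vector of $E_8$ (a root), and for an unsigned graph the Gram matrix of the vectors $\phi(v)$ equals $A(G)+2I$, with adjacency corresponding to inner product $1$ and non-adjacency to inner product $0$.

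Next I would bound the number $m=|V(G)|$ of vertices from both sides. Since $\lambda_1(G)>-2$, the matrix $A(G)+2I$ is positive definite, so the $m$ vectors $\phi(v)$ are linearly independent; as they lie in the $8$-dimensional ambient space of $E_8$, this gives $m\leqslant 8$. For the lower bound, let $\Lambda_G$ be the $\Z$-span of the $\phi(v)$. Being generated by vectors of norm $2$, $\Lambda_G$ is a root lattice, hence an orthogonal sum of irreducible components of types $A$, $D$, $E_6$, $E_7$, $E_8$. If every component were of type $A$ or $D$, then $\Lambda_G$ would embed into some $D_N$ (using $A_k\subseteq D_{k+1}$), and the $\phi(v)$ would furnish an integral representation of $G$ in $D_N$, contradicting exceptionality. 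So $\Lambda_G$ has a component of type $E$. Because $G$ is connected, adjacent vertices have non-orthogonal images, so the configuration $\{\phi(v)\}$ is connected and $\Lambda_G$ is irreducible; thus $\Lambda_G\in\{E_6,E_7,E_8\}$ and $m=\operatorname{rank}(\Lambda_G)\in\{6,7,8\}$. This already explains the three cases.

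It then remains to count, for each $m\in\{6,7,8\}$, the isomorphism classes of graphs $G$ arising from a linearly independent spanning set of $m$ roots of $E_m$ with pairwise inner products in $\{0,1\}$. This is a finite problem: $E_8$ has only $240$ roots, and two such spanning sets yield isomorphic graphs precisely when they are related by an automorphism of the ambient root system (an element of $W(E_8)$ followed by a relabelling of vertices). The strategy is therefore to fix a copy of $E_6$, $E_7$, and $E_8$ inside $E_8$, enumerate the $W(E_8)$-orbits of spanning sets of roots realizing each admissible Gram matrix, read off the corresponding graph, and discard isomorphic duplicates.

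I expect the reduction above to be routine; the genuine obstacle is the last step, namely carrying out the orbit enumeration accurately so as to obtain \emph{exactly} $20$, $110$, and $443$ graphs. The orbit structure of spanning sets of roots under $W(E_8)$ is intricate, and pruning the non-spanning and isomorphic configurations is delicate, so in practice this census is performed by computer; matching the stated counts is where one relies on, or reproduces, the computations of Bussemaker--Neumaier~\cite{BusseNeum} and Doob--Cvetkovi\'c~\cite{DoobCvetkovic:LAA79}.
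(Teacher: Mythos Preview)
The paper does not supply its own proof of this theorem: it is quoted as a known result from \cite{BusseNeum,DoobCvetkovic:LAA79}, serving as the unsigned precursor to the edge-signed enumeration in Theorem~\ref{thm:E8enum}. So there is no proof in the paper to compare against, and your outline is a fair reconstruction of how such a classification is obtained.

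Your argument for $m\in\{6,7,8\}$ is correct and cleanly put. One small wrinkle: the statement that two spanning root-sets give isomorphic graphs \emph{precisely} when they are related by an element of $W(E_8)$ (plus relabelling) is not quite the right equivalence relation. What you actually need is equality of Gram matrices up to simultaneous permutation; that this can always be realised by an isometry of the ambient $E_m$ is true here because any isometry between two rank-$m$ sublattices of $E_m$ extends to an automorphism of $E_m$, but that fact deserves a word of justification rather than being asserted.

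It may interest you that for the edge-signed generalisation the paper organises the enumeration differently: rather than listing spanning root-sets directly, it proceeds inductively on the number of vertices, invoking \cite[Theorem~1.10]{ArsComb1993} to the effect that every exceptional edge-signed graph on $n\in\{7,8\}$ vertices is obtained from one on $n-1$ vertices by attaching a single line of $E_8$. This yields the same counts but gives a more structured search tree, which is how the tables in the appendix are built.
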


\begin{figure}
\begin{center}
	\begin{tikzpicture}[yshift=4cm, xshift=0.8cm]
		\foreach \type/\pos/\name in {{vertex/(0,0.5)/bgn}, {vertex/(1,0.5)/b1}, {vertex/(2,0.5)/e1}, {vertex/(3,0.5)/b11}, {vertex/(4,0.5)/c11}, {vertex/(4,1.5)/c1}, {vertex/(5,0.5)/d1}, {vertex/(6,0.5)/f1}}
			\node[\type] (\name) at \pos {};
		\foreach \pos/\name in {{(0,0.1)/\alpha_8}, {(1,0.1)/\alpha_7}, {(2,0.1)/\alpha_6}, {(3,0.1)/\alpha_5}, {(4,0.1)/\alpha_4}, {(4,1.8)/\alpha_2}, {(5,0.1)/\alpha_3}, {(6,0.1)/\alpha_1}}
			\node at \pos {$\name$};
		\foreach \edgetype/\source/ \dest in {pedge/bgn/b1, pedge/b1/e1, pedge/e1/b11, pedge/b11/c11, pedge/c11/c1, pedge/c11/d1, pedge/d1/f1}
			\path[\edgetype] (\source) -- (\dest);
	\end{tikzpicture}
\end{center}
	\caption{The simple roots of $E_8$}
	\label{fig:E8}
\end{figure}
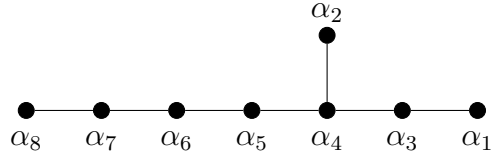


To describe our results, we need a list of $120$ lines of the
root system $E_8$. Such a list can be found in 
\cite[Appendix B]{Gleitz:KNSconj}, and this is also sufficient
to describe our results for $E_6$ and $E_7$, since these root systems
can be embedded in $E_8$. Each of the $120$ lines are determined 
by a vector $\beta=\sum_{i=1}^8 b_i\alpha_i$ and
the coefficients $(b_1,\dots,b_8)$ for each $\beta$ are given in \cite[Appendix B]{Gleitz:KNSconj}.
The inner products among the basis vectors $\alpha_1,\dots,\alpha_8$
are described by Figure~\ref{fig:E8}, where $(\alpha_i,\alpha_i)=2$
for all $i\in\{1,\dots,8\}$, $(\alpha_i,\alpha_j)=-1$ if
$\alpha_i$ and $\alpha_j$ are adjacent, 
$(\alpha_i,\alpha_j)=0$ otherwise.
The lines determined by $E_6$ are precisely the $36$ lines with
$b_7=b_8=0$, and the lines determined by $E_7$ are precisely 
the $63$ lines with $b_8=0$.

\begin{remark}
The numbering of the $120$ lines of $E_8$ in \cite[Appendix B]{Gleitz:KNSconj}
is the natural one in the following sense.
Every line can be represented by a vector
$\alpha=\sum_{i=1}^8 a_i\alpha_i$ with $\|\alpha\|^2=2$ and $a_i\geqslant0$
for all $i$.
We assign a total ordering to the lines as follows.
Let $\alpha=\sum_{i=1}^8 a_i\alpha_i$,
$\beta=\sum_{i=1}^8 b_i\alpha_i$.
We define $[\alpha] \prec [\beta]$ if either
\begin{itemize}
	\item[] $\sum_{i=1}^8 a_i < \sum_{i=1}^8 b_i$;
	\item[or]
	\item[] 
$\sum_{i=1}^8 a_i = \sum_{i=1}^8 b_i$ and
	\item[] 
$a_1=b_1,\dots,a_i=b_i,a_{i+1}>b_{i+1}$. 
\end{itemize}
Note that this ordering on the set of lines is the default ordering given by 
\texttt{MAGMA}~\cite{Magma} on the set of positive roots of the root system $E_8$ (which are in
one-to-one correspondence with the lines of the line system $E_8$).
\end{remark}

Let $n=7$ or $8$ and let $G$ be an $n$-vertex exceptional edge-signed graph.
By \cite[Theorem 1.10]{ArsComb1993}, $G$ can be obtained from an $(n-1)$-vertex exceptional edge-signed graph $H$ by attaching a vertex to $H$.

In the tables in the appendix we describe, up to switching equivalence, the edge-signed graphs that are not integrally represented.
Each edge-signed graph is described by referring to the lines used to construct it and each line is referred to by its number in \cite[Appendix B]{Gleitz:KNSconj},
or equivalently, by its position in the total ordering.
Clearly, exceptional edge-signed graphs with smallest eigenvalue greater than $-2$ must have at least $6$ vertices and at most $8$ vertices.
In Table~\ref{swc:6} the $32$ exceptional switching classes $\cS_{6,i}$
$(1\leqslant i\leqslant 32)$ of $6$-vertex edge-signed graphs are described.
The first $20$ out of the $32$ consist of those classes which contain an
unsigned graph, and these ordered according to 
\cite[Table A2]{CRS:LMS314}, in which graphs are ordered by the number of 
edges. The remaining $12$ switching classes are also ordered by the number 
of edges.
In Table~\ref{swc:7} the $233$ exceptional switching classes 
$\cS_{7,i}$ $(1\leqslant i\leqslant 233)$ of $7$-vertex 
edge-signed graphs are described.
Each switching class $\cS_{7,i}$ is obtained by adding a line 
$ l $ to $\cS_{6,k} $.
In Table~\ref{swc:7}, the triples $i, l, k$ are listed, and the first
$110$ switching classes consist of those classes which contain an
unsigned graph, and these ordered according to 
\cite[Table A2]{CRS:LMS314}.
Similarly, in Tables~\ref{swc:8_1}, \ref{swc:8_2}, \ref{swc:8_3}, \ref{swc:8_4}, and \ref{swc:8_5},
the $ 1242 $ exceptional switching classes $\cS_{8,i}$ 
$(1\leqslant i\leqslant 1242)$ of $8$-vertex edge-signed graphs are described.
Each switching class $\cS_{8,i}$ is obtained by adding a line $ l $ to 
$\cS_{7,k}$ and these triple $i,k,l$ are given in the tables.
As before, in the tables the $443$ unsigned graphs in \cite[Table A2]{CRS:LMS314} come first.

	In Tables~\ref{swc:7}--\ref{swc:8_5}, the columns labelled by $i^\prime$ denote the last two digits of $i$, that is, $i^\prime \equiv i \pmod{100}$.

We can summarise the tables below in the following theorem.

\begin{theorem}\label{thm:E8enum}
	Let $G$ be an exceptional edge-signed graph having smallest eigenvalue greater than $-2$.
	Then $G$ is one of
	\begin{enumerate}
		\item $32$ edge-signed graphs on $6$ vertices;
		\item $233$ edge-signed graphs on $7$ vertices;
		\item $1242$ edge-signed graphs on $8$ vertices.
	\end{enumerate}
\end{theorem}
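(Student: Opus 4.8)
The plan is to realise exceptional edge-signed graphs, up to switching, as subsets of the $120$ lines of $E_8$, and then to enumerate these subsets by a one-vertex-extension induction. The starting point is the correspondence already implicit in Section~\ref{sec:pre}: if $G$ is connected with $\lambda_1(G)>-2$, then $A(G)+2I$ is positive definite and is the Gram matrix of \emph{linearly independent} norm-$\sqrt2$ vectors with pairwise inner products in $\{0,\pm1\}$, and replacing a representative $\mathbf{x}_i$ by $-\mathbf{x}_i$ on its line is precisely a switching at vertex $i$. Hence a switching class of connected edge-signed graphs with $\lambda_1>-2$ represented by $E_8$ is determined by the underlying set of chosen lines of $E_8$, taken up to switching equivalence. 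By Corollary~\ref{cor:integralReps}, such a class is exceptional exactly when the chosen lines are not contained in any $D_n$; equivalently, by Theorem~\ref{thm:E8}, when no integral representation exists.

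Next I would pin down the size range $6\le m\le 8$, where $m=|V(G)|$. The upper bound is immediate: positive definiteness of $A(G)+2I$ makes the representing vectors linearly independent, and they lie in the $8$-dimensional span of $E_8$, so $m\le 8$. For the lower bound, if $m\le 5$ the $m$ independent norm-$2$ vectors lie in a root subsystem of $E_8$ of rank at most $5$; every such subsystem has only components of type $A$ and $D$ (a component of type $E$ requires rank at least $6$) and therefore embeds into some $D_n$. Thus a graph on at most $5$ vertices is integrally represented and cannot be exceptional, so the enumeration only needs to treat $m\in\{6,7,8\}$.

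For the base case $m=6$ I would run over $6$-subsets of the $120$ lines of $E_8$, using the coordinates $(b_1,\dots,b_8)$ of \cite[Appendix B]{Gleitz:KNSconj} and the inner products read off from Figure~\ref{fig:E8}, retaining those whose Gram matrix is positive definite (so $\lambda_1>-2$), connected, and not contained in any $D_n$, and then reducing the survivors up to switching equivalence by a canonical-form computation in \texttt{MAGMA}~\cite{Magma}; this yields the $32$ classes $\cS_{6,1},\dots,\cS_{6,32}$ of Table~\ref{swc:6}. For the inductive step ($m=7$, then $m=8$) I would invoke \cite[Theorem 1.10]{ArsComb1993}: every $m$-vertex exceptional edge-signed graph contains an $(m-1)$-vertex exceptional edge-signed graph as an induced subgraph. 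Hence it suffices, for each class $\cS_{m-1,k}$ found so far and each of the $120$ lines $l$, to adjoin $l$, test whether the enlarged configuration is again positive definite, connected, and exceptional, and then group the results into switching classes. This produces the $233$ classes at $m=7$ and the $1242$ classes at $m=8$, each recorded in the tables by its generating data $(i,k,l)$.

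The main obstacle is algorithmic rather than conceptual: carrying out the switching-equivalence reduction reliably so that no class is counted twice and none is omitted, within a computation that at $m=8$ must organise more than a thousand classes obtained by extending the $233$ seven-vertex classes. Completeness of the search rests on Theorem~\ref{thm:CGSS} (every candidate embeds in $E_8$) together with the one-vertex-extension theorem of \cite{ArsComb1993} (every exceptional graph is reached from a smaller one), while soundness of the exceptionality test rests on Corollary~\ref{cor:integralReps} and Theorem~\ref{thm:E8}. A built-in consistency check is available via Theorem~\ref{thm:CvetDoob}: exactly $20$, $110$, and $443$ of the classes on $6$, $7$, and $8$ vertices must contain an unsigned graph, and since the tables are ordered so that these appear first, recovering the counts $20$, $110$, $443$ among the totals $32$, $233$, $1242$ confirms both the completeness and the correct handling of the unsigned subcase.
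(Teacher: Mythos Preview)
Your proposal is correct and follows essentially the same computer-assisted enumeration as the paper: represent candidates by subsets of the $120$ lines of $E_8$, restrict to $6\le m\le 8$, list the $6$-vertex switching classes directly, and build up one vertex at a time via \cite[Theorem 1.10]{ArsComb1993}, with the unsigned counts of Theorem~\ref{thm:CvetDoob} serving as a consistency check. One minor slip: your appeal to Theorem~\ref{thm:E8} is misplaced, since that result concerns the modified matrix $\hat A(G,v)$ rather than the exceptionality test needed here---Corollary~\ref{cor:integralReps} alone already gives the equivalence between ``not contained in any $D_n$'' and ``not integrally represented.''
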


\section{Hoffman graphs}
\label{sec:Hoffgraph}

A \textbf{Hoffman graph} $\mathfrak H$ is defined as a graph $(V,E)$ 
with a distinguished 
coclique 
$V_f(\mathfrak H) \subset V$ called \textbf{fat} vertices, 
the remaining vertices $V_s(\mathfrak H) = V\backslash V_f(\mathfrak H)$ 
are called \textbf{slim} vertices.
For more background on Hoffman graphs see some of 
the authors' previous papers \cite{-3,-1-tau,gt-3}.
Let $\mathfrak H$ be a Hoffman graph and suppose its adjacency matrix $A$ has 
the following form
\[
	A = \begin{pmatrix}
		A_s & C \\
		C^\top & 0
	\end{pmatrix},
\]
where the fat vertices come last.
Define $B(\mathfrak H) := A_s - C C^\top$.
The eigenvalues of $\mathfrak H$ are defined to be the eigenvalues of $B(\mathfrak H)$.
A Hoffman graph $\mathfrak H$ is called \textbf{fat} 
if every slim vertex of $\mathfrak H$ has at least one fat neighbour.
In this section we show how our results relate to the classification 
of fat Hoffman graphs with smallest eigenvalue greater than $-3$.

It is shown in \cite{gt-3} that if $\lambda_1(B(\mathfrak H)) > -3$ then 
every slim vertex is adjacent to at most two fat vertices 
and at most one slim vertex can be adjacent to more than one fat vertex.
It therefore follows that if a fat Hoffman graph $\mathfrak H$ has smallest
eigenvalue $\lambda_1(\mathfrak H) > -3$ then the diagonal of 
$B(\mathfrak H) + I$ consists of at most one $-1$ entry and the remaining entries $0$.
In other words, $B(\mathfrak H) + I$ is either the adjacency matrix $A(G)$ of a signed
graph $G$, or the modified adjacency matrix $\hat{A}(G,v)$ of $G$ with respect to
some vertex $v$.
The \textbf{special graph} $\cS(\mathfrak H)$
of a Hoffman graph $\mathfrak H$ is the edge-signed graph 
whose adjacency matrix 
has the same off-diagonal entries as $B(\mathfrak H)$ 
and zeros on the diagonal.

\begin{theorem}
Let $\mathfrak H$ be a fat Hoffman graph in which every slim vertex 
has exactly one fat neighbour. 
Then $\mathfrak H$ has smallest
eigenvalue greater than $-3$ if and only if 
$\cS(\mathfrak H)$ is switching equivalent to one of the edge-signed
graphs in Theorem~\ref{thm:3} or Theorem~\ref{thm:E8enum}.
\end{theorem}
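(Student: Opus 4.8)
The plan is to reduce the statement to the classification of edge-signed graphs with smallest eigenvalue greater than $-2$ by identifying $B(\mathfrak H)+I$ with the adjacency matrix of the special graph $\cS(\mathfrak H)$. Everything then follows from Theorem~\ref{thm:3}, Theorem~\ref{thm:E8enum}, and the dichotomy of Corollary~\ref{cor:integralReps}.

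First I would compute the diagonal of $B(\mathfrak H)=A_s-CC^\top$ under the hypothesis that every slim vertex has exactly one fat neighbour. Here $A_s$ is the slim adjacency matrix and $C$ the slim--fat adjacency, so $(CC^\top)_{uu}$ counts the fat neighbours of the slim vertex $u$, which is exactly $1$ by hypothesis, while $(A_s)_{uu}=0$. Hence $B(\mathfrak H)_{uu}=-1$ for every slim vertex $u$, and $B(\mathfrak H)+I$ has zero diagonal. For $u\neq v$ the entry $(CC^\top)_{uv}$ counts common fat neighbours, which is $0$ or $1$, so $B(\mathfrak H)_{uv}\in\{-1,0,1\}$. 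Thus $B(\mathfrak H)+I$ is a symmetric $\{0,\pm1\}$-matrix with zero diagonal whose off-diagonal entries agree with those of $B(\mathfrak H)$; by the definition of $\cS(\mathfrak H)$ this gives the key identity $A(\cS(\mathfrak H))=B(\mathfrak H)+I$. This is precisely where the hypothesis is used: ``exactly one fat neighbour'' forces \emph{all} diagonal entries of $B(\mathfrak H)+I$ to vanish, so no modified adjacency matrix $\hat{A}$ occurs, and $\cS(\mathfrak H)$ is an honest edge-signed graph.

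From this identity the eigenvalues satisfy $\lambda_1(\mathfrak H)=\lambda_1(B(\mathfrak H))=\lambda_1(A(\cS(\mathfrak H)))-1$, so $\lambda_1(\mathfrak H)>-3$ if and only if $\lambda_1(\cS(\mathfrak H))>-2$. Since switching equivalence preserves eigenvalues, for the ``if'' direction I would note that every graph appearing in Theorem~\ref{thm:3} or Theorem~\ref{thm:E8enum} has smallest eigenvalue greater than $-2$, whence $\lambda_1(\mathfrak H)>-3$. For the ``only if'' direction, if $\lambda_1(\cS(\mathfrak H))>-2$ then, applying the argument to each connected component, Theorem~\ref{thm:CGSS} and Corollary~\ref{cor:integralReps} split each component of $\cS(\mathfrak H)$ into an integrally represented graph, described up to switching by Theorem~\ref{thm:3}, or an exceptional graph, enumerated by Theorem~\ref{thm:E8enum}.

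I do not expect a serious obstacle here: once $A(\cS(\mathfrak H))=B(\mathfrak H)+I$ is established the result is immediate from the earlier classification. The only point requiring care is the bookkeeping around connectivity and switching, since Theorem~\ref{thm:3} and Theorem~\ref{thm:E8enum} are stated for connected edge-signed graphs; I would therefore either assume $\cS(\mathfrak H)$ connected or observe that $\lambda_1(\cS(\mathfrak H))$ is the minimum of the smallest eigenvalues of its connected components and apply the classification componentwise.
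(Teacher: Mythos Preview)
Your argument is correct and follows exactly the paper's approach: the paper's proof simply observes that the hypothesis forces $B(\mathfrak H)+I=A(\cS(\mathfrak H))$, deduces that $\lambda_1(\mathfrak H)>-3$ if and only if $\lambda_1(\cS(\mathfrak H))>-2$, and invokes Theorems~\ref{thm:3} and~\ref{thm:E8enum}. Your additional remarks on the diagonal computation and on connectivity are more explicit than the paper but do not diverge from it.
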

\begin{proof}
Since every slim vertex has exactly one fat neighbour,
$B(\mathfrak H)+I$ is the adjacency matrix of $\cS(\mathfrak H)$.
Thus $\mathfrak H$ has smallest eigenvalue greater than $-3$ if and only if
$\cS(\mathfrak H)$ has smallest eigenvalue greater than $-2$.
The result then follows since Theorems~\ref{thm:3} and \ref{thm:E8enum}
give a classification of edge-signed graphs with smallest eigenvalues
greater than $-2$.
\end{proof}

\begin{lemma}\label{lm:012}
Let $\mathfrak H$ be a fat Hoffman graph 
in which every slim vertex has exactly one fat neighbour. 
Then in the special graph $\cS(\mathfrak H)$ of $\mathfrak H$,
there are no $(+)$-edges in the neighbourhood of any fat vertex.
In particular, $\cS(\mathfrak H)$ does not contain 
a cycle in which all but one edge are $(-)$-edges.
\end{lemma}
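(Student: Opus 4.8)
The plan is to translate the hypothesis ``every slim vertex has exactly one fat neighbour'' into a statement about the matrix $C$, and then read off the sign pattern of $B(\mathfrak H)$ directly.

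First I would observe that the hypothesis forces each row of $C$ (indexed by slim vertices) to contain exactly one $1$. Consequently the off-diagonal entries of $CC^\top$ are easy to describe: for distinct slim vertices $i$ and $j$, we have $(CC^\top)_{ij}=1$ when $i$ and $j$ share their unique fat neighbour, and $(CC^\top)_{ij}=0$ otherwise. Thus the slim vertices partition into classes according to their fat neighbour, and the neighbourhood of a fat vertex in $\mathfrak H$ is exactly one such class.

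Next I would compute the off-diagonal entries of $B(\mathfrak H)=A_s-CC^\top$. Since $(A_s)_{ij}\in\{0,1\}$ and $(CC^\top)_{ij}\in\{0,1\}$, we get $B_{ij}\in\{-1,0,1\}$, so $\cS(\mathfrak H)$ is genuinely an edge-signed graph. More precisely: if $i$ and $j$ lie in the same class then $B_{ij}=(A_s)_{ij}-1\in\{-1,0\}$, whereas if they lie in different classes then $B_{ij}=(A_s)_{ij}\in\{0,1\}$. In other words, a $(-)$-edge of $\cS(\mathfrak H)$ can only join two vertices of the same class, while a $(+)$-edge can only join two vertices of different classes. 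The first assertion is then immediate: within the neighbourhood of a fat vertex, which is a single class, every edge is a $(-)$-edge, so there are no $(+)$-edges.

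For the ``in particular'' claim I would argue by contradiction. Suppose $\cS(\mathfrak H)$ contains a cycle $v_1v_2\cdots v_kv_1$ in which all edges but one are $(-)$-edges, and say the exceptional edge $v_kv_1$ is therefore a $(+)$-edge. Each $(-)$-edge $v_iv_{i+1}$ forces $v_i$ and $v_{i+1}$ into the same class, so traversing the $(-)$-edges $v_1v_2,\dots,v_{k-1}v_k$ places $v_1,\dots,v_k$ all in one class. But then $v_k$ and $v_1$ lie in the same class, contradicting the fact that a $(+)$-edge joins vertices of different classes (equivalently, contradicting the first part of the lemma). Hence no such cycle exists. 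I expect the only real work to be the correct computation of $CC^\top$ and the resulting dichotomy; once ``$(-)$-edges stay within a class, $(+)$-edges cross classes'' is in hand, both conclusions follow with no further computation.
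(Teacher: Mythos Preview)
Your proof is correct and follows essentially the same argument as the paper's. Both proofs observe that for two slim vertices sharing a fat neighbour the corresponding off-diagonal entry of $B(\mathfrak H)$ lies in $\{-1,0\}$, then chain along the $(-)$-edges of the cycle to force all its vertices into a common fat neighbourhood, contradicting the presence of the remaining $(+)$-edge; you simply spell out the $CC^\top$ computation and the resulting ``$(-)$-edges stay within a class, $(+)$-edges cross classes'' dichotomy more explicitly than the paper does.
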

\begin{proof}
Let $N$ be the set of neighbours of a fat vertex. Then the off-diagonal
entry of $B(\mathfrak H)$ corresponding to two vertices of $N$ cannot be $1$.
This shows the first claim. 
Suppose that $\cS(\mathfrak H)$ contains a cycle
$v_0,v_1,\dots,v_n,v_0$ such that all but the edge $\{v_n,v_0\}$ are
$(-)$-edges. Then $v_i,v_{i+1}$ have a common fat neighbour for 
$i=0,1,\dots,n-1$. Since every slim vertex has exactly one fat neighbour,
it follows that $v_0,\dots,v_n$ have a common fat neighbour. But this
contradicts the first claim.
\end{proof}

Lemma~\ref{lm:012} implies that not every edge-signed graph can be the 
special graph of a fat Hoffman graph in which every slim vertex has 
exactly one fat neighbour.

For an edge-signed graph $\cS=(V,E^+,E^-)$, we denote by $\cS^-$
the unsigned graph $(V,E^-)$.
Let $\cS$ be an edge-signed graph that is switching
equivalent to one of the edge-signed
graphs in Theorem~\ref{thm:3} or Theorem~\ref{thm:E8enum}, and 
$\cS$ has no cycle in which all but one edge are $(-)$-edges.
Then every fat Hoffman graph $\mathfrak H$ in which every slim vertex has 
exactly one fat neighbour,
satisfying $\cS(\mathfrak H)=\cS$ is obtained as follows.
Let
\[
V(\cS)=\bigcup_{i=1}^n V_i\quad\text{(disjoint)}
\]
be a partition satisfying
\begin{enumerate}
\item 
for all $i\in\{1,\dots,n\}$, the subgraph induced on $V_i$ 
contains no $(+)$-edges, 
\item 
every connected component of $\cS^-$ is contained in $V_i$ for
some $i\in\{1,\dots,n\}$.
\end{enumerate}
Define a Hoffman graph $\mathfrak H$
as follows. 
The vertex set of $\mathfrak H$ consists of the set of
slim vertices $V(\cS)$ and the set of
fat vertices $\{f_1,\dots,f_n\}$. 
The edges are $\{f_i,u\}$ with $u\in V_i$, $1\leq i\leq n$;
$\{u,v\}$ with $u,v\in V_i$, $1\leq i\leq n$, $\{u,v\}\notin E(\cS)$; and
$\{u,v\}$ with $u\in V_i$, $v\in V_j$, $1\leq i,j\leq n$, 
$i\neq j$, $\{u,v\}\in E(\cS)$.
Observe that for $u,v\in V_i$, $\{u,v\}\notin E(\cS)$ if and only
if $\{u,v\}\notin E^-(\cS)$, and
for $u\in V_i$, $v\in V_j$ with $i\neq j$, $\{u,v\}\in E(\cS)$ if and only
if $\{u,v\}\in E^+(\cS)$.
Then $\cS(\mathfrak H)=\cS$ holds, 
and $\lambda_{1}(\mathfrak H)=\lambda_{1}(\cS)-1$.

In general, given an edge-signed graph $\cS$, there are a number of
fat Hoffman graphs $\mathfrak H$ with $\cS(\mathfrak H)=\cS$, 
satisfying the condition
that every slim vertex of $\mathfrak H$ has exactly one fat neighbour. 
For example,
consider the simplest case where $\cS$ is a path consisting only of
$(+)$-edges. Then such Hoffman graphs are in one-to-one correspondence
with partitions of the vertex-set into cocliques.


		\section{Acknowledgement} 
	\label{sec:acknowledgement}
		The authors are grateful for the comments of the referees.



\newpage
\section*{Appendix}

\begin{table}[h]
\caption{Switching classes $\cS_{6,i}\ (i=1,\ldots,32) $}


}
\label{swc:8_5}
\end{table}

\end{document}